\def\T{\mathbb{T}}
\def\d{\mathrm{d}}
\def\R{\mathbb{R}}
\def\Z{\mathbb{Z}}
\crefname{hypothesis}{Hypothesis}{Hypotheses}
\crefname{fact}{Fact}{Facts}
\title{SPIKE: Stable Physics-Informed Kernel Evolution Method for Solving Hyperbolic Conservation Laws\thanks{Submitted to the editors DATE.
\funding{L.Z. was supported by the National Key Research and Development Program of China  2024YFA0919500 and National Natural Science Foundation of China (No. 12225102, T2321001, 12288101, and 12426653). J.Z. was supported in part by the National Natural Science Foundation of China (Grant No.12301520), JR25003, and the Beijing Outstanding Young Scientist Program (No.JWZQ20240101027).}}}
\author{Hua Su\thanks{Beijing International Center for Mathematical Research, Peking University, Beijing, 100871, China. (\email{suhua@pku.edu.cn}).}
\and Lei Zhang\thanks{Corresponding author. Beijing International Center for Mathematical Research, Center for Quantitative Biology, Center for Machine Learning Research, Peking University, Beijing, 100871, China. (\email{zhangl@math.pku.edu.cn}).}
\and Jin Zhao\thanks{Academy for Multidisciplinary Studies, Capital Normal University, and Beijing National Center for Applied Mathematics, Beijing, 100048, China.
  (\email{zjin@cnu.edu.cn}).}
}
\begin{document}
\maketitle

\begin{abstract}
We introduce the Stable Physics-Informed Kernel Evolution (SPIKE) method for numerical computation of inviscid hyperbolic conservation laws. SPIKE resolves a fundamental paradox: how strong-form residual minimization can capture weak solutions containing discontinuities.
SPIKE employs reproducing kernel representations with regularized parameter evolution, where Tikhonov regularization provides a smooth transition mechanism through shock formation, allowing the dynamics to traverse shock singularities. 
This approach automatically maintains conservation, tracks characteristics, and captures shocks satisfying Rankine-Hugoniot conditions within a unified framework requiring no explicit shock detection or artificial viscosity. 
Numerical validation across scalar and vector-valued conservation laws confirms the method's effectiveness.
\end{abstract}

\begin{keywords}
hyperbolic conservation laws, shock wave, Rankine-Hugoniot conditions, physics-informed, reproducing kernel Hilbert space, Tikhonov regularization
\end{keywords}

\begin{MSCcodes}
35L65, 35L67, 65M50
\end{MSCcodes}

\section{Introduction}

This work focuses on the numerical solution of one-dimensional inviscid \emph{hyperbolic conservation laws} (HCL) on the torus $\T=\R/\Z$:
\begin{align*}
\partial_t q + \partial_x f(q) &= 0, \quad (x,t) \in \T \times \mathbb{R}_+,\\ 
q(x,0) &= q_0(x), \quad x \in \T,
\end{align*}
where $q = q(x,t) \in \mathbb{R}^d$ denotes the conserved variables and $f: \mathbb{R}^d \to \mathbb{R}^d$ is the flux function. This partial differential equation (PDE) represents a fundamental class of models that preserve physical quantities such as mass, momentum, or energy over time. These laws are essential in applications spanning computational fluid dynamics, gas dynamics, and wave propagation \cite{EvansPDE,leveque2002finite,toro2013riemann}. More broadly, they constitute the mathematical foundation for modern science and engineering by offering a unified language to describe phenomena across diverse fields. 

A defining feature of HCL is their tendency to develop \emph{shocks}—discontinuities that form spontaneously from smooth initial data \cite{lax1973hyperbolic}. For scalar problems ($d=1$), the solution remains constant along characteristic curves defined by $\dot{x} = f'(q)$. The crossing of these characteristic curves signifies the formation of a shock, at which point the classical \emph{strong} solution ceases to exist, requiring the PDE to be interpreted in an integral or \emph{weak} sense. Across such discontinuities, the weak solution must satisfy the Rankine-Hugoniot jump condition, which determines the shock speed from the states on either side \cite{Hesthaven2018,lax1973hyperbolic,leveque1992numerical}.

The accurate numerical treatment of shocks presents a long-standing computational challenge.       
Near discontinuities, standard discretizations often produce spurious oscillations (Gibbs phenomena) or rely on numerical diffusion that smears sharp features \cite{godounov1959difference}.      
To address these issues, numerous specialized techniques have been developed, including limiters \cite{ZHONG2013397}, Essentially Non-Oscillatory (ENO) \cite{HARTEN1987231}, Weighted ENO (WENO) schemes \cite{jiang1996efficient,liu1994weighted,Shu_2020} and oscillation-eliminating discontinuous Galerkin (OEDG) \cite{peng2025oedg}. 
These approaches typically introduce carefully designed stabilization mechanisms that enforce the entropy solution while suppressing oscillations.

\subsection{Machine Learning Approaches}

Machine learning offers an alternative to classical numerical schemes, leveraging the expressive power of neural networks \cite{Cybenko1989,Hornik1989} to construct flexible, mesh-free solution representations. Approaches in this domain are primarily distinguished by the information they use for training: empirical data or governing physical laws.

Data-driven models learn input-output mappings from large datasets \cite{ModelReductionNN,duLMM2022,kovachki2023neural,li2020fourier,lu2021learning}. While powerful for in-distribution tasks, their solutions do not inherently respect physical laws, which can lead to poor generalization and extrapolation.

Physics-informed methods, conversely, provide a remedy by embedding the governing equations directly into the training objective \cite{NNforPDE1994,bing2018DRM,Lagaris712178,raissi2019physics,SIRIGNANO20181339,sun2024lift,Zhang2024}. The Physics-Informed Neural Network (PINN) framework \cite{DeepXDE,raissi2019physics} exemplifies this by representing the solution over the entire space-time domain with a neural network, $q(x,t)\approx q_{\text{NN}}(x,t;\theta)$.
The parameters $\theta$ are then optimized to minimize the PDE residual alongside losses for initial and boundary conditions.
While broadly applicable, this approach often faces training difficulties and is restricted to a fixed time interval, precluding its use in long-term forecasting.

The dynamic approach overcomes these limitations by parameterizing only the spatial dependence of the solution, $q(x,t)\approx q\big(x;\theta(t)\big)$, and evolving the parameters according to an ordinary differential equation (ODE) \cite{RONS,NeuralGalerkin,TENG,EDNN,hao2023NED,petnn,kast2023positional}:
\begin{equation*}      
    \dot{\theta}(t) = \gamma\big(\theta(t)\big), \quad \theta(0) = \theta_0.       
\end{equation*}

For a PDE of the form $\partial_t q + \mathcal{N}_x q = 0$, where $\mathcal{N}_x$ is a spatial differential operator, the Dirac-Frenkel variational principle \cite{dirac1930,frenkel1934,lubich2008} determines the parameter velocity $\gamma(\theta)$ by minimizing the instantaneous PDE residual in a least-squares sense:
\begin{equation*}
    \gamma(\theta) = \arg\min_{\gamma} \int_{\Omega} \left| \frac{\partial q}{\partial \theta} \cdot \gamma + \mathcal{N}_x q \right|^2 \d x.
\end{equation*}

While the theoretical basis for this dynamic evolutionary framework is well-established in Hilbert space settings for smooth problems \cite{feischl2024regularized,lubich2025regularized,24OTDDTO}, a paradox arises with hyperbolic conservation laws. 
Minimizing a strong-form residual is counterintuitive for capturing shocks, as the PDE is not classically defined at discontinuities. Therefore, a naive application of this principle would be expected to become singular or unstable as shocks form.
Consequently, some methods incorporate weak-form constraints \cite{deryck2024wpinns,PINNWE}, viewing them as essential to handle discontinuous solutions.

Contrary to this view, we show that this paradox can be resolved not by abandoning the strong form, but by carefully designing the solution representation and regularizing its evolution. 
This combination allows the strong-form objective to effectively guide the solution parameters through the shock-formation singularity, removing the need for explicit weak-form constraints or auxiliary shock detection techniques.

\subsection{Our Contribution}

We introduce the Stable Physics-Informed Kernel Evolution (SPIKE) method, which resolves the paradox through a purpose-built kernel representation that replaces traditional neural network parameterizations.
\begin{equation*}
    q(x,t)\approx q(x;\theta(t)) = \sum_{i=1}^N a_i(t)k\big(x,x_i(t)\big) + b(t),
\end{equation*}
where the parameters $\theta(t)=\{a_i(t), x_i(t), b(t)\}$ consist of evolving amplitudes $a_i$, adaptive positions $x_i$, and a bias term $b$. 
The parameter evolution is governed by the regularized minimization of the strong-form residual:
\begin{equation*}
    \dot{\theta} = \arg\min_{\gamma} \int_{\mathbb{T}} \left| \frac{\partial q}{\partial \theta} \cdot \gamma + \partial_x f(q) \right|^2 \d x + \lambda |\gamma|^2.
\end{equation*}

This approach appears paradoxical: we capture discontinuous weak solutions by minimizing a strong-form residual that is classically undefined at discontinuities.
We resolve this paradox by showing that appropriate kernel representation with regularized evolution automatically exhibit correct shock behavior through its intrinsic mathematical structure rather than explicit constraints or detection mechanisms.

Our main contributions are:
\begin{itemize}
    \item \textbf{Kernel-Based Framework}: We prove that dynamic kernel representations with evolving positions naturally bridge strong-form optimization and weak solution theory, automatically enforcing conservation, characteristic propagation, and Rankine-Hugoniot jump conditions without auxiliary constraints.
    \item \textbf{Computational Efficiency}: We develop linear-complexity algorithms by exploiting the kernel structure through interpolation theory, achieving computational efficiency competitive with traditional methods.
    \item \textbf{Regularization Mechanism}: We reveal that Tikhonov regularization provides a smooth transition mechanism through shock formation, allowing the optimization dynamics to traverse discontinuities that appear singular from the strong-form perspective.
\end{itemize}

The remainder of this paper is organized as follows. Section 2 introduces the SPIKE methodology, detailing the kernel representation and the evolution dynamics. Section 3 derives the efficient numerical algorithm. Section 4 provides a theoretical analysis of the method's properties, with a focus on regularization and the shock-capturing mechanism. Section 5 presents numerical experiments on several benchmark problems. Finally, Section 6 concludes with discussion and directions for future work.

\section{SPIKE Method}

In this section, we present the formulation of the SPIKE method, beginning with its motivation inspired by neural networks, and then proceeding to its specific representation and evolution dynamics.

\subsection{Motivation from Neural Networks}

The SPIKE method employs a parametric representation inspired by analyzing two-layer neural networks for periodic functions.  
Consider the standard formulation where the input $x \in \mathbb{T}$ is mapped onto the unit circle $(\cos(2\pi x), \sin(2\pi x)) \in \mathbb{S}^1$ and then processed through a fully-connected neural network with $N$ hidden neurons:
\begin{equation*}
    q(x;\theta)=\sum_{i=1}^{N} W_i^{(2)} \cdot \sigma\left(W_i^{(1)}\cdot \begin{bmatrix} \cos(2\pi x) \\ \sin(2\pi x) \end{bmatrix} + b_i^{(1)} \right) + b^{(2)}.
\end{equation*}

If the first-layer biases are set to zero ($b_i^{(1)}=0$) and $\sigma$ is the ReLU activation function, this expression admits a more compact formulation.  
By expressing the weight vector $W_i^{(1)} \in \mathbb{R}^2$ in polar coordinates as
\begin{equation*}
    W_i^{(1)} = \left(|W_i^{(1)}|\cos(2\pi x_i), |W_i^{(1)}|\sin(2\pi x_i)\right),
\end{equation*}
the network's output can be rewritten as:
\begin{align*}
    q(x;\theta)&=\sum_{i=1}^{N} W_i^{(2)} \cdot\text{ReLU}\left[|W_i^{(1)}|\cos(2\pi x_i)\cos(2\pi x) + |W_i^{(1)}|\sin(2\pi x_i)\sin(2\pi x)\right] + b\\
    &=\sum_{i=1}^{N} W_i^{(2)} \cdot |W_i^{(1)}| \cdot\,\text{ReLU}\big[\cos\big(2\pi(x - x_i)\big)\big] + b.
\end{align*}

Defining new amplitudes $a_i = W_i^{(2)}\cdot|W_i^{(1)}|$ and the basis function $\varphi_{\text{NN}}(x) = \text{ReLU}[\cos(2\pi x)]$, we obtain
\begin{equation*}
    q(x;\theta) = \sum_{i=1}^{N} a_i \varphi_{\text{NN}}(x - x_i) + b.
\end{equation*}

This derivation shows that certain neural network architectures are mathematically equivalent to linear combinations of shifted basis functions. The SPIKE method builds on this insight by replacing the neuron's activation profile $\varphi_{\text{NN}}(x)$ with a kernel $\varphi(x)$ specifically designed for representing solutions with sharp gradients and discontinuities.

\subsection{Parametric Representation and Evolution Framework}

Motivated by the preceding analysis, the SPIKE method parameterizes the solution as a sum of shifted basis functions:
\begin{equation}
    \label{rep_with_bias}
    q(x,t) \approx q\big(x;\theta(t)\big) = \sum_{i=1}^{N} a_i(t) \varphi\big(x - x_i(t)\big) + b(t),
\end{equation}
where $\varphi:\T\to\R$ is a kernel function to be specified later.
The parameters $\theta(t)$ consist of evolving amplitudes $a_i(t)$, basis positions $x_i(t)$, and a bias term $b(t)$. 
The temporal evolution of these parameters is governed by an ODE system, involving two phases:
\begin{itemize}
\item \textbf{Initialization:} The initial parameters $\theta_0$ are determined by fitting the model to the initial condition: 
\begin{equation}
    \label{SPIKE_init}
    \theta_0 = \arg\min_{\theta} \big\|q(\cdot;\theta) - q_0(\cdot)\big\|^2 .
\end{equation}
\item \textbf{Evolution:} The parameters $\theta(t)$ are evolved according to an ODE, where the time derivative $\dot{\theta}(t)$ at each instant is found by solving the regularized optimization problem:
\begin{equation}
\label{SPIKE_EDNN}
\begin{aligned}    \min_{\dot{\theta}}&\underbrace{\int_{\T}\Big|\partial_t q + \partial_x f(q)\Big|^2 \d x}_{\text{Instantaneous PDE Residual}} + \underbrace{\lambda_a \sum_{i=1}^{N} |\dot{a}_i|^2 + \lambda_x \sum_{i=1}^{N} |\dot{x}_i|^2 + \lambda_b |\dot{b}|^2}_{\text{Tikhonov Regularization}},\\
    \text{where}&\quad\partial_t q = \sum_{i=1}^{N} \Big[\dot{a}_i(t) \varphi\big(x-x_i(t)\big) - a_i(t)\dot{x}_i(t)\varphi'\big(x-x_i(t)\big)\Big] + \dot{b}(t).
\end{aligned}
\end{equation}

The optimization problem \eqref{SPIKE_EDNN} is a least-squares problem with respect to $\dot{\theta}$. Once $\dot{\theta}$ is determined, the parameters $\theta(t)$ are advanced in time using standard ODE integration techniques.
\end{itemize}

\begin{remark}[The Dual Role of Regularization]
    The Tikhonov regularization terms serve two crucial functions. 
    First, they ensure the optimization problem is strongly convex, guaranteeing a unique solution $\dot{\theta}$ at each time step. 
    Second, and more critically, regularization is essential for the long-term existence of solutions to the parameter evolution ODEs, preventing the finite-time singularities that may arise in the unregularized ($\lambda=0$) system. 
    This property ensures stability of the evolution through complex phenomena such as shock formation, which we examine further in Section \ref{Zigzag}. 
    In practice, we use a small but positive regularization (e.g., $\lambda \sim 10^{-7}$) to achieve a balance between numerical stability and solution accuracy.
\end{remark}

\subsection{Automatic Conservation Property}
\label{sec:conservation}
A significant advantage of the least-squares formulation in \eqref{SPIKE_EDNN} is its inherent preservation of conserved quantities.
To establish this property, we first observe that for any constant $c\in\mathbb{R}$:
\begin{equation*}
    \sum_{i=1}^{N} a_i(t) \varphi\big(x - x_i(t)\big) + b(t) \equiv \sum_{i=1}^{N} a_i(t) \Big[\varphi\big(x - x_i(t)\big) - c\Big] + \Big[b(t) + c \sum_{i=1}^{N} a_i(t)\Big].
\end{equation*}
This identity shows that the choice of basis function $\varphi$ is not unique for the representation \eqref{rep_with_bias}, since any constant shift can be absorbed into the bias term $b(t)$.  
Therefore, without loss of generality, we may assume that $\varphi$ has zero mean: $\int_{\mathbb{T}} \varphi(x) \, \d x = 0$. Under this assumption, the bias term acquires a clear physical interpretation:
\begin{equation}
    \label{equ:bias}
    b(t) = \int_{\mathbb{T}} q\big(x;\theta(t)\big) \, \d x.
\end{equation}

Now, the first-order optimality condition for $\dot{b}$ in \eqref{SPIKE_EDNN} yields:
\begin{equation*}
    \lambda_b \dot{b} + \int_{\mathbb{T}} \bigg(\sum_{i=1}^{N} \big[\dot{a}_i \varphi(x-x_i) - a_i \dot{x}_i \varphi'(x-x_i)\big] + \dot{b} + \partial_x f(q) \bigg) \d x = 0.
\end{equation*}

By virtue of the zero-mean property of $\varphi$ and the periodic boundary conditions, all integral terms vanish except the term containing $\dot{b}$, resulting in $(\lambda_b + 1) \dot{b} = 0$. Consequently, the bias term $b(t)$ remains constant throughout the evolution.  
Combined with \eqref{equ:bias}, this ensures exact preservation of the total quantity of the solution without requiring explicit conservation constraints.

Given this conservation property, we can simplify the representation \eqref{rep_with_bias} to:
\begin{equation}
\label{rep_wo_bias}
    q\big(x;\theta(t)\big) = \sum_{i=1}^{N} a_i(t) \varphi\big(x - x_i(t)\big) + \bar{q}_0,
\end{equation}
where $\bar{q}_0 = \int_{\T} q_0(x) \, \d x$ denotes the constant, conserved quantity determined by the initial condition.
The evolving parameter set reduces to $\theta(t) = \{a_i(t), x_i(t)\}_{i=1}^{N}$, and our subsequent focus shifts to the zero-mean component of the solution.

\subsection{Reproducing Kernel Choice}
\label{sec:varphi_choice}
For the integrals in \eqref{SPIKE_EDNN} to be well-defined, we require $\varphi\in H^1(\mathbb{T})$. Combined with the zero-mean constraint on $\varphi$, this motivates consideration of the function space:
\begin{equation}
    \label{def:RKHS}
    H^1(\T)/\R = \left\{h \in H^1(\T) : \bar{h} = \int_{\T} h(x) \d x = 0\right\}.
\end{equation}
This space is equipped with the norm $\|h\|_{H^1(\mathbb{T})/\mathbb{R}} := \|h'\|_{L^2(\mathbb{T})}$, which is equivalent to the standard $H^1$ norm through the Poincaré inequality $\|h-\bar{h}\|_{L^2(\mathbb{T})} \lesssim\|h'\|_{L^2(\mathbb{T})}$.

Our choice of basis function emerges naturally from the structure of this space:
\begin{proposition}
    \label{prop:RKHS}
    $H^1(\T)/\R$ is a reproducing kernel Hilbert space (RKHS) with the reproducing kernel $k(x,y) = \varphi(x-y)$, where $\varphi:\T\to\R$ is the piecewise quadratic function:
\begin{equation}
\label{def_varphi}
\varphi(x)=\frac{1}{2}\{x\}^2-\frac{1}{2}\{x\}+\frac{1}{12},
\end{equation}
and $\{x\}\in[0,1)$ denotes the fractional part of $x$.
\end{proposition}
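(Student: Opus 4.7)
The plan is to verify the two conditions characterizing an RKHS: that $H^1(\T)/\R$ equipped with $\langle f,g\rangle := \int_{\T} f'g'\,\d x$ is a Hilbert space on which every point evaluation is continuous, and that $k(x,y)=\varphi(x-y)$ satisfies the reproducing identity. I would begin with the Hilbert-space setup: $\langle\cdot,\cdot\rangle$ is an inner product on the zero-mean subspace by the Poincar\'e inequality $\|h\|_{L^2}\lesssim\|h'\|_{L^2}$ stated just above, and the same inequality shows the induced norm is equivalent to the standard $H^1$ norm, so completeness transfers from $H^1(\T)$. The one-dimensional Sobolev embedding $H^1(\T)\hookrightarrow C(\T)$ makes each evaluation functional $\delta_y:h\mapsto h(y)$ bounded, whence Riesz representation already produces some reproducing kernel; what remains is to identify it as $\varphi(x-y)$.

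For that identification, I would first confirm $\varphi\in H^1(\T)/\R$: a direct calculation yields $\int_0^1\varphi\,\d x=0$, the one-sided values at $x=0^+$ and $x=1^-$ both equal $1/12$ so $\varphi$ is continuous on $\T$, and its weak derivative is $\varphi'(x)=\{x\}-1/2\in L^\infty(\T)$. The reproducing identity to verify is
\begin{equation*}
    \int_{\T} h'(x)\,\varphi'(x-y)\,\d x = h(y) \qquad \text{for all } h\in H^1(\T)/\R.
\end{equation*}
I would carry this out via Fourier series on the torus. Recognizing $\varphi$ as $\tfrac{1}{2}B_2(\{x\})$, where $B_2$ is the second Bernoulli polynomial, the standard computation gives $\hat\varphi_n=1/(4\pi^2 n^2)$ for $n\neq 0$. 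Writing $h=\sum_{n\neq 0}\hat h_n e^{2\pi i n x}$ and using Parseval, the $n$-th term of $\int h'(x)\varphi'(x-y)\,\d x$ carries a weight $(2\pi i n)\cdot(-2\pi i n)\cdot(4\pi^2 n^2)^{-1}=1$, so the sum collapses to $\sum_{n\neq 0}\hat h_n e^{2\pi i n y}=h(y)$.

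The main subtlety is the distributional structure of $\varphi''$: classically $(\varphi')'=1$, but $\varphi'$ has a unit jump at $x\equiv 0$ on $\T$, so $\varphi''=1-\delta_0$ in $\mathcal{D}'(\T)$. An alternative proof would integrate by parts on the interval $(0,1)$ and carefully track this jump, recovering $-\int h(x)(1-\delta_0(x-y))\,\d x=-\bar h+h(y)=h(y)$; the Fourier route bypasses that bookkeeping and is what I would follow. Apart from this minor point, the argument is essentially computational, relying only on standard Sobolev embedding, the Poincar\'e inequality, and the Bernoulli-polynomial Fourier expansion.
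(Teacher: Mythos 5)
Your proposal is correct, but it argues the reproducing identity by a genuinely different route than the paper. The paper's proof is a three-line direct computation: it writes $\langle k(x,\cdot),h\rangle_{H^1(\T)/\R}=-\int_{\T}\varphi'(x-y)h'(y)\,\d y$, changes variables to $z=x-y$ so that the integration runs over $z\in(0,1)$ where $\varphi'(z)=z-\tfrac12$ is a single smooth branch, and integrates by parts once, the boundary term giving $h(x)$ and the remaining integral vanishing by the zero-mean property of $h$; parametrizing over $(0,1)$ is exactly how the paper sidesteps the distributional jump $\varphi''=1-\delta_0$ that you flag, so your ``alternative proof'' sketch is essentially the paper's argument in disguise. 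Your main route instead recognizes $\varphi=\tfrac12 B_2(\{x\})$ and works on the Fourier side, where $\hat\varphi_n=(4\pi^2n^2)^{-1}$ makes the weight $(2\pi i n)(-2\pi i n)\hat\varphi_n=1$ and the reproducing identity collapses immediately. That approach buys conceptual clarity — it exhibits $\varphi$ as the Green's-type kernel inverting $-\d^2/\d x^2$ on zero-mean periodic functions and generalizes readily to other Sobolev orders — and you are also more careful than the paper in verifying the RKHS prerequisites (completeness via Poincar\'e, boundedness of point evaluations via the embedding $H^1(\T)\hookrightarrow C(\T)$, and $k(x,\cdot)\in H^1(\T)/\R$), which the paper takes as read. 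The only point worth making explicit in your write-up is the justification for evaluating $h$ pointwise by its Fourier series and for interchanging sum and integral: for $h\in H^1(\T)$ the series converges absolutely and uniformly since $\sum_{n\neq0}|\hat h_n|\le\bigl(\sum_{n\neq0}n^{2}|\hat h_n|^{2}\bigr)^{1/2}\bigl(\sum_{n\neq0}n^{-2}\bigr)^{1/2}<\infty$, so the step is legitimate, but it deserves a sentence.
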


\begin{proof}
    By definition \eqref{def_varphi}, $\varphi'(z) = z - \frac{1}{2}$ for $z \in (0,1)$.
    The reproducing property is verified directly: for any $x\in\mathbb{T}$ and $h \in H^1(\mathbb{T})/\mathbb{R}$, one has $k(x,\cdot)\in H^1(\mathbb{T})/\mathbb{R}$ and
    \begin{equation}
        \label{eq:reproducing}
        \begin{aligned}
            \langle k(x,\cdot), h\rangle_{H^1(\T)/\R} &= \int_{\T} \partial_y k(x,y)\cdot\partial_y h(y) \d y = -\int_{\T}\varphi'(x-y) h'(y) \d y\\
            &= -\int_0^1 \varphi'(z) h'(x-z) \d z = -\int_{0}^{1}(z-\frac12)h'(x-z)\d z\\
            &= [(z-\frac12)h(x-z)]\big|_{0}^{1}-\int_{0}^{1}h(x-z)\d(z-\frac12)=h(x).
        \end{aligned}
    \end{equation}
    The third equality uses a change of variables $z=x-y$, and the final equality follows from integration by parts and the zero-mean property of $h$.
\end{proof}

The choice of this specific kernel has two significant consequences.
First, since $H^1(\T)/\R$ is the RKHS generated by $\varphi$, the representation \eqref{rep_with_bias} possesses the universal approximation property within this space.  
Second, the piecewise quadratic form of $\varphi$ endows the representation with the inherent capacity to approximate functions exhibiting sharp transitions or discontinuities, as illustrated in Figure \ref{fig:varphi}.  
This intrinsic adaptivity makes the SPIKE method particularly effective for representing shock waves and other steep gradients.

\begin{figure}[ht]
    \centering
    \includegraphics[width=0.53\textwidth]{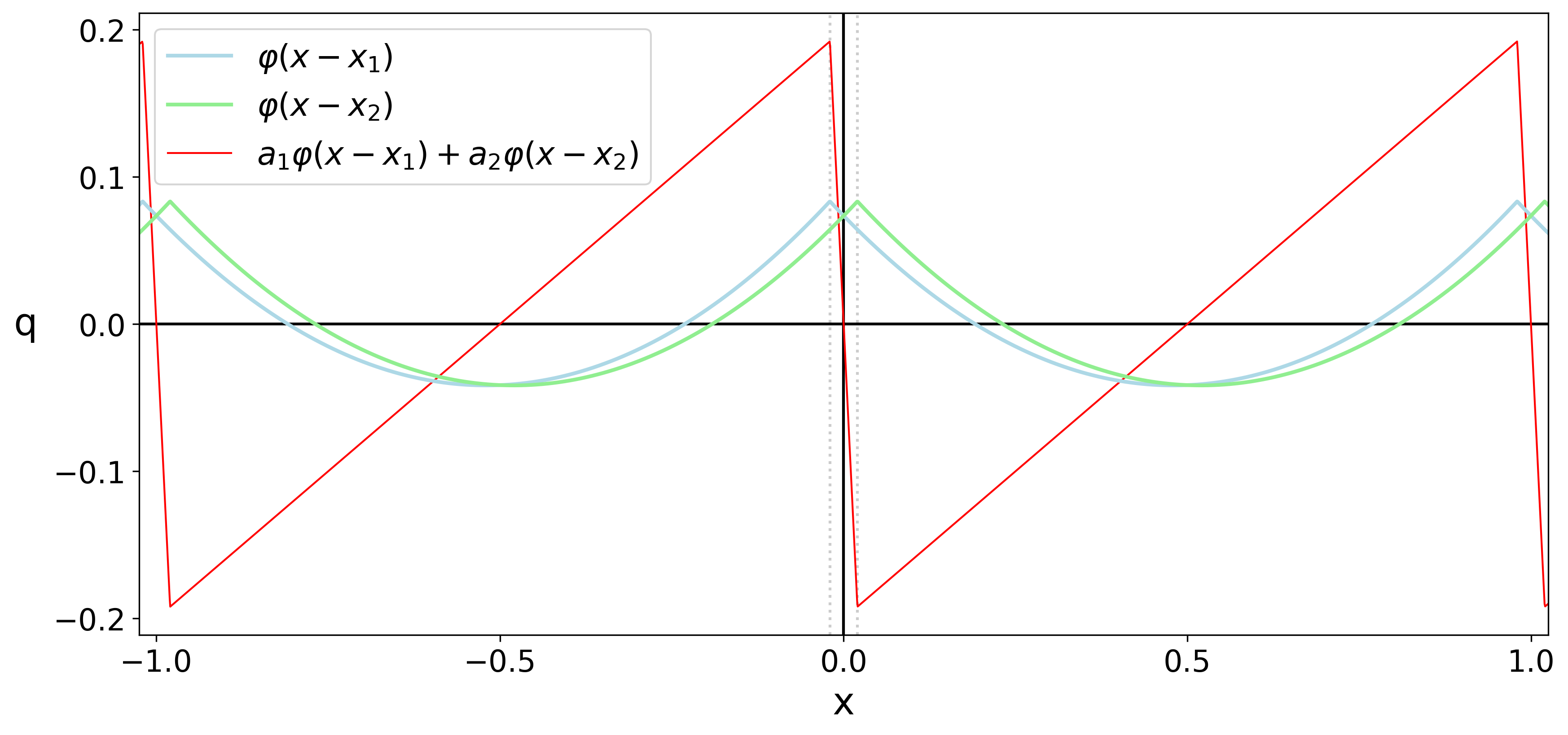}
    \caption{Linear combination of two kernels to approximate a jump discontinuity, with $a_1=-a_2=10$ and $x_1=-x_2=-0.02$. The resulting piecewise linear approximation closely captures the sharp transition.}
    \label{fig:varphi}
\end{figure}

\subsection{Linear Constraint and Spline Structure}
To further refine the model's structure and efficiency, we impose an additional linear constraint on the amplitudes:
\begin{equation}
\label{constraint}
\sum_{i=1}^N a_i = 0.
\end{equation}

This condition reveals that the kernel representation \eqref{rep_wo_bias} is, in fact, a linear spline. 
To formalize this, we periodically extend the basis positions $\{x_i\in\T\}_{i=1}^N$ to $\{x_i\in\R\}_{i\in\Z}$ by defining $x_{i+mN} = x_i + m\in[m,m+1)$ for $m\in\Z$, and assume they are arranged in strictly ascending order.

\begin{proposition}
    Under constraint \eqref{constraint}, the representation \eqref{rep_wo_bias} defines a linear spline with knots at $\{x_i\}$, and the amplitudes satisfy
\begin{equation}
\label{eq:amp_second_derivative}
-a_i = \frac{q(x_{i+1};\theta)-q(x_i;\theta)}{x_{i+1}-x_i} - \frac{q(x_i;\theta)-q(x_{i-1};\theta)}{x_i-x_{i-1}}.
\end{equation}
\end{proposition}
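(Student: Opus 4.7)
The plan is to exploit the piecewise-quadratic structure of $\varphi$ to identify $q(\cdot;\theta)$ as a linear spline, and then to read off the amplitudes $a_i$ from the jumps of $\partial_x q$ at the knots. Throughout, I will work with the periodic extension $\{x_i\}_{i\in\Z}$ arranged in strictly ascending order, and use the explicit formula $\varphi'(z) = \{z\} - \tfrac{1}{2}$, noting that $\varphi'$ is smooth on each open interval $(n,n+1)$ with $\varphi''(z)=1$ there, while it has a jump of $-1$ at each integer.

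First I would fix an index $i$ and analyze $q(x;\theta)$ on the open interval $(x_i, x_{i+1})$. For each $j$, the quantity $x - x_j$ stays strictly inside a single interval of the form $(n_j, n_j+1)$ as $x$ varies in $(x_i,x_{i+1})$, so $\varphi(x - x_j)$ is a true quadratic polynomial on this interval and $\partial_x^2 \varphi(x-x_j) = 1$. Summing with the constraint \eqref{constraint} gives $\partial_x^2 q(x;\theta) = \sum_{j=1}^N a_j = 0$ on $(x_i, x_{i+1})$, so $q(\cdot;\theta)$ is affine there. Together with the fact that $\varphi$ is continuous on $\T$ (as can be checked directly from \eqref{def_varphi}, since the quadratic patches meet at integer values with value $\tfrac{1}{12}$), this upgrades the representation to a globally continuous, piecewise affine function with possible breakpoints exactly at the $x_i$, i.e.\ a linear spline with knots at $\{x_i\}$.

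Next I would compute the jump of $\partial_x q(\cdot;\theta)$ across the knot $x_i$. Only the term $a_i \varphi(x-x_i)$ contributes, since for $j\neq i$ the function $\varphi'(x-x_j)$ is continuous at $x=x_i$ (the argument avoids the integers in a neighborhood of $x_i$). Using $\lim_{z\to 1^-}\varphi'(z)=\tfrac12$ and $\lim_{z\to 0^+}\varphi'(z)=-\tfrac12$, the jump of $\varphi'(x-x_i)$ at $x=x_i$ (right minus left limit) equals $-1$, and therefore
\begin{equation*}
    \bigl[\partial_x q(\cdot;\theta)\bigr]_{x=x_i} := \partial_x q(x_i^+;\theta) - \partial_x q(x_i^-;\theta) = -a_i.
\end{equation*}

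Finally, since $q(\cdot;\theta)$ is affine on each interval between consecutive knots, its one-sided derivatives at $x_i$ coincide with the secant slopes $\partial_x q(x_i^-;\theta) = \frac{q(x_i;\theta)-q(x_{i-1};\theta)}{x_i - x_{i-1}}$ and $\partial_x q(x_i^+;\theta) = \frac{q(x_{i+1};\theta)-q(x_i;\theta)}{x_{i+1}-x_i}$. Substituting into the jump identity yields \eqref{eq:amp_second_derivative}. The main obstacle is conceptually minor but worth being careful about: one must verify that, in a neighborhood of a knot $x_i$, the other shifts $x-x_j$ ($j\neq i$) do not also cross an integer, which is exactly the content of our assumption that the knots are strictly ordered in the periodic extension; this is what isolates the contribution of $a_i$ to the jump.
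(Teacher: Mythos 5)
Your proof is correct and follows essentially the same route as the paper: use $\sum_i a_i = 0$ together with $\varphi'' \equiv 1$ away from integers to conclude $q(\cdot;\theta)$ is piecewise affine with breakpoints at the knots, then read $-a_i$ off the jump of $\varphi'$ (equal to $-1$ at integers) and identify the one-sided derivatives with the secant slopes. Your extra care in isolating the contribution of the $i$-th term near $x_i$ is a fine elaboration of what the paper leaves implicit, but it is not a different argument.
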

\begin{proof}
    By definition, the basis $\varphi$ in \eqref{def_varphi}, as well as the linear combination in \eqref{rep_wo_bias}, is piecewise quadratic. Since $\varphi''(x) = 1$ almost everywhere (except at integer points), it follows that $\partial_x^2 q(x;\theta) = \sum_{i=1}^N a_i \varphi''(x-x_i) = 0$ almost everywhere except at the basis positions. Hence, $q(x;\theta)$ is piecewise linear with breakpoints at $\{x_i\}$.

    Moreover, since $\varphi'(x)$ is continuous at $x \notin \mathbb{Z}$, but has jump discontinuities with $\varphi'(0^+) - \varphi'(0^-) = -1$ (where $0^\pm$ denotes the left and right limits), it follows that $\partial_x q(x;\theta)$ exhibits jump discontinuities with $\partial_x q(x_i^+;\theta) - \partial_x q(x_i^-;\theta) = -a_i$. The piecewise linearity of $q(x;\theta)$ then implies that
    $$\partial_x q(x_i^\pm;\theta) = \frac{q(x_{i\pm1};\theta) - q(x_i;\theta)}{x_{i\pm1} - x_i},$$ which leads directly to \eqref{eq:amp_second_derivative}.
\end{proof}

This proposition illuminates the mechanism by which the SPIKE representation \eqref{rep_wo_bias} captures solution structures.  
In smooth regions, the amplitudes $a_i$ correspond to finite difference approximations of the function's second derivative.  
Near discontinuities, two knots can be positioned close together with large, opposing amplitudes to form a sharp jump, as illustrated in Figure \ref{fig:varphi}.

\begin{remark}
    The SPIKE method exhibits structural similarities to moving mesh methods \cite{MovingMesh}, as both represent the solution on a set of adaptive grid points. However, a fundamental distinction exists in the mechanism driving the mesh movement. 
    Traditional moving mesh methods typically rely on predefined adaptation strategies, such as equidistribution principles or monitor functions, which are often independent of the governing PDE's structure. These strategies may be based on solution gradients or curvature, but generally do not directly incorporate the underlying PDE dynamics.

    In contrast, SPIKE can be interpreted as a \emph{physics-informed moving mesh} method. The evolution of the knots $\{x_i(t)\}$ is determined directly by the minimization of the PDE residual \eqref{SPIKE_EDNN}. This means the mesh motion is intrinsically coupled to the physics of the problem. As subsequent sections will show, this approach naturally aligns the knot movement with the characteristic structure of the hyperbolic conservation law, enabling effective shock capturing.
\end{remark}

\section{Numerical Implementation}
\label{fastsolver}
This section details the efficient algorithm for solving the optimization problems central to the SPIKE method. By exploiting the mathematical structure of the reproducing kernel $\varphi$ defined in \eqref{def_varphi}, we derive a fast solver that reduces the computational complexity for both the initialization and evolution steps to $O(N)$, a significant improvement over the $O(N^3)$ cost of a naive implementation.

\subsection{Initialization}
\label{sec:init}
For the initialization phase, we adopt a simplified yet effective strategy. Instead of performing a full non-linear optimization over both knot positions $\{x_i\}$ and amplitudes $\{a_i\}$, we pre-select a fixed set of knots and solve only for the amplitudes to fit the initial condition:
\begin{equation*}
    \min_{a_i\in\R^d}\bigg\|\Big(\sum_{i=1}^N a_i\varphi(\cdot-x_i)+\bar{q}_0\Big)-q_0\bigg\|_{H^1(\T)/\R}^2\quad \text{s.t. } \sum_{i=1}^N a_i = 0.
\end{equation*}

The Lagrangian for this constrained optimization problem is given by
\begin{equation*}
    \mathcal{L}(a_i,\alpha)=\bigg\|\sum_{i=1}^N a_i\varphi(\cdot-x_i)\,-(q_0-\bar{q}_0)\bigg\|_{H^1(\T)/\R}^2
    +2\alpha\cdot\sum_{i=1}^N a_i,
\end{equation*}
where $2\alpha\in\R^d$ is the Lagrange multiplier associated with the linear constraint.

Leveraging the reproducing property established in Proposition \ref{prop:RKHS}, we have:
\begin{equation*}
    \Bigl\langle \varphi(\cdot-x_i), \varphi(\cdot-x_j)\Bigr\rangle_{H^1(\T)/\R} = \varphi(x_i-x_j),\quad
    \Bigl\langle \varphi(\cdot-x_i), q_0-\bar{q}_0\Bigr\rangle_{H^1(\T)/\R} = q_0(x_i)-\bar{q}_0,
\end{equation*}

The Lagrangian then expands to:
\begin{equation*}
    \mathcal{L}(a_i,\alpha)=\sum_{i,j=1}^N \varphi(x_i-x_j)a_i\cdot a_j
    -2\sum_{i=1}^N a_i\cdot\left(q_0(x_i)-\bar{q}_0\right)
    +\Big\|q_0-\bar{q}_0\Big\|_{H^1(\T)/\R}^2
    +2\alpha\cdot\sum_{i=1}^N a_i.
\end{equation*}

The first-order optimality conditions yield:
\begin{align}
    \label{opt_a_init}
    \frac12\frac{\partial \mathcal{L}}{\partial a_i}=&\sum_{j=1}^N \varphi(x_i-x_j)a_j-\bigl(q_0(x_i)-\bar{q}_0\bigr)+\alpha=0,
\end{align}
where we can deduce that $q(x_i;\theta_0) = q_0(x_i)-\alpha$ at the knot points. Combining this with equation \eqref{eq:amp_second_derivative} yields:
\begin{equation}
    a_i = -\Big(\frac{q_0(x_{i+1})-q_0(x_i)}{x_{i+1}-x_i}-\frac{q_0(x_{i})-q_0(x_{i-1})}{x_{i}-x_{i-1}}\Big).
\end{equation}

This expression provides a direct, closed-form formula for the amplitudes based on finite difference approximations of the second derivative of the initial condition at the knot points.
In practice, when no prior information about the initial condition structure is available, we select a uniform grid $x_i = i / N$, which simplifies implementation and ensures uniform initial coverage of the domain.

\subsection{Evolution}
The evolution phase requires solving the optimization problem \eqref{SPIKE_EDNN} subject to the linear constraint $\sum_{i=1}^N \dot{a}_i = 0$ at each time step.
This presents a more sophisticated challenge due to the coupling between amplitude and knot dynamics.

\subsubsection{Optimality Conditions}
We begin by deriving the first-order optimality conditions for the constrained optimization problem.
As established in Section \ref{sec:conservation}, the bias is constant ($\dot{b}\equiv0$), so the Lagrangian for the constrained problem on $\{\dot{a}_i, \dot{x}_i\}$ is:
\begin{equation}
    \label{lagrangian_evolution}
    \begin{aligned}
        \mathcal{L}(\dot{a}_i,\dot{x}_i,\alpha)=&\int_{\T}\bigg|\sum_{i=1}^{N} \big[\varphi(x-x_i) \dot{a}_i - a_i\varphi'(x-x_i) \dot{x}_i\big] + \partial_x f(q)\bigg|^2 \d x \\
        &+ \lambda_a \sum_{i=1}^{N} |\dot{a}_i|^2 + \lambda_x \sum_{i=1}^{N} |\dot{x}_i|^2+2\alpha\cdot\sum_{i=1}^N\dot{a}_i,
    \end{aligned}
\end{equation}
where $2\alpha\in\R^d$ is the Lagrange multiplier associated for the constraint on $\dot{a}_i$.

To facilitate the following derivation, we introduce the autocorrelation function:
\begin{equation}
\label{def_g}
    g(x):=\int_{\T}\varphi(x+y)\varphi(y)\d y=-\frac{1}{24}\{x\}^4+\frac{1}{12}\{x\}^3-\frac{1}{24}\{x\}^2+\frac{1}{720},
\end{equation}
where $\{x\}\in[0,1)$ denotes the fractional part of $x \in \T$.

Expanding the squared residual term in \eqref{lagrangian_evolution} and expressing the resulting integrals using $g_{ij}=g(x_i-x_j)$, $g'_{ij}=g'(x_i-x_j)$, and $g''_{ij}=g''(x_i-x_j)$ yields:
\begin{align*}
    \mathcal{L}(\dot{a}_i,\dot{x}_i,\alpha)=&\sum_{i,j=1}^{N}\Big[g_{ij}\dot{a}_i\cdot\dot{a}_j-g'_{ij}\dot{a}_i\cdot a_j\dot{x}_j+g'_{ij}\dot{a}_j\cdot a_i\dot{x}_i-g''_{ij}a_i\dot{x}_i\cdot a_j\dot{x}_j\Big]\\
    &+2\sum_{i=1}^N\Big[\dot{a}_i\cdot\int_{\T}\varphi(x-x_i)\partial_x f(q)\d x-a_i\dot{x}_i\cdot\int_{\T}\varphi'(x-x_i)\partial_x f(q)\d x\Big]\\
    &+\lambda_a \sum_{i=1}^{N} |\dot{a}_i|^2+\lambda_x \sum_{i=1}^{N} |\dot{x}_i|^2+2\alpha\cdot\sum_{i=1}^N\dot{a}_i+\int_{\T}|\partial_x f(q)|^2\d x.
\end{align*}

The first-order optimality conditions with respect to $\dot{a}_i$ and $\dot{x}_i$ are:
\begin{equation}
    \label{opt_a_x}
    \begin{aligned}
        \frac12\frac{\partial \mathcal{L}}{\partial \dot{a}_i}&=\sum_{j=1}^N\left[g_{ij}\dot{a}_j-g'_{ij}a_j\dot{x}_j\right]+\alpha+\int_{\T}\varphi(x-x_i)\partial_x f(q)\d x+\lambda_a\dot{a}_i=0,\\
        \frac12\frac{\partial \mathcal{L}}{\partial \dot{x}_i}&=a_i\cdot\sum_{j=1}^N\left[g'_{ij}\dot{a}_j-g''_{ij}a_j\dot{x}_j\right]-a_i\cdot\int_{\T}\varphi'(x-x_i)\partial_x f(q)\d x+\lambda_x\dot{x}_i=0.
    \end{aligned}
\end{equation}

Rather than solving this dense linear system directly, which incurs $O(N^3)$ computational complexity, we introduce a key reformulation through two auxiliary functions:
\begin{align}
    \label{def_F}
    \mathfrak{F}(\xi) &= -\int_{\T}\varphi(x-\xi)\partial_x f(q) \d x,\\
    \label{def_h}
    \mathfrak{h}(\xi) &= \sum_{j=1}^N\Big[\dot{a}_j g(\xi-x_j)-a_j\dot{x}_j g'(\xi-x_j)\Big]+\alpha.
\end{align}

The key insight behind this reformulation is that $\mathfrak{F}(\xi)$ depends solely on the current solution $q\big(x;\theta(t)\big)$ and is computable via numerical integration, while $\mathfrak{h}(\xi)$ encapsulates all unknown parameters $\{\dot{a}_i, \dot{x}_i, \alpha\}$ in a structured representation. This separation allows us to rewrite the optimality conditions \eqref{opt_a_x} more elegantly:  
\begin{equation}  
    \label{opt_a_x_new}  
    \begin{aligned}  
        \frac12\frac{\partial \mathcal{L}}{\partial \dot{a}_i} &= \mathfrak{h}(x_i)-\mathfrak{F}(x_i)+\lambda_a\dot{a}_i = 0,\\  
        \frac12\frac{\partial \mathcal{L}}{\partial \dot{x}_i} &= a_i\cdot\mathfrak{h}'(x_i)-a_i\cdot\mathfrak{F}'(x_i)+\lambda_x\dot{x}_i = 0.
    \end{aligned}  
\end{equation}

This reformulation establishes a clear connection between the known function $\mathfrak{F}$ and the unknown function $\mathfrak{h}$ at the knot points $\{x_i\}$, providing a structured pathway to determine the unknown parameters.

To simplify further, we define a difference function $\mathfrak{D}(\xi):=\mathfrak{F}(\xi)-\mathfrak{h}(\xi)$. The optimality conditions then take the remarkably simple form:
\begin{equation}
    \label{opt_a_x_F}
    \begin{aligned}
        \mathfrak{D}(x_i) &= \lambda_a \dot{a}_i,\\
        a_i \cdot \mathfrak{D}'(x_i) &= \lambda_x \dot{x}_i.
    \end{aligned}
\end{equation}

These conditions reveal that once we determine $\mathfrak{D}(x_i)$ and $\mathfrak{D}'(x_i)$ at the knots, we can directly compute $\dot{a}_i$ and $\dot{x}_i$. 
The heart of our fast solver is the demonstration that the stacked vectors $Z_i := \begin{bmatrix}\mathfrak{D}(x_i)\\\mathfrak{D}'(x_i)\end{bmatrix}\in\R^{2d}$ satisfy a block tridiagonal linear system with periodic boundary conditions:
\begin{equation}
\label{block_tridiag_full}
    \begin{bmatrix}
        \mathbb{G}_{1,1} & \mathbb{G}_{1,2} & 0 & \cdots & 0 & \mathbb{G}_{1,N}\\
        \mathbb{G}_{2,1} & \mathbb{G}_{2,2} & \mathbb{G}_{2,3} & \cdots & 0 & 0\\
        0 & \mathbb{G}_{3,2} & \mathbb{G}_{3,3} & \cdots & 0 & 0\\
        \vdots & \vdots & \vdots & \ddots & \vdots & \vdots\\
        0 & 0 & 0 & \cdots & \mathbb{G}_{N-1,N-1} & \mathbb{G}_{N-1,N}\\
        \mathbb{G}_{N,1} & 0 & 0 & \cdots & \mathbb{G}_{N,N-1} & \mathbb{G}_{N,N}
    \end{bmatrix}
    \begin{bmatrix}
        Z_1\\
        Z_2\\
        \vdots\\
        Z_N
    \end{bmatrix}
    =
    \begin{bmatrix}
        R_1\\
        R_2\\
        \vdots\\
        R_N
    \end{bmatrix}
\end{equation}
The block matrices $\mathbb{G}_{i,j}\in\R^{2d\times 2d}$ and right-hand side vectors $R_i\in\R^{2d}$ are derived in the following subsections.
This system can be solved in $O(N)$ time using standard algorithms for such structures, such as the SPIKE algorithm \cite{MatrixComputations,POLIZZI2006177}.

\subsubsection{Cubic Spline Structure}

The derivation of the block tridiagonal system relies on understanding the special mathematical structure of $\mathfrak{h}(\xi)$ defined in \eqref{def_h}.

From the definition \eqref{def_g}, we observe that $g(x)$ is a piecewise quartic polynomial with $g^{(4)}(x)=-1$ almost everywhere (except at integer points). Combined with the definition \eqref{def_h} and the constraint $\sum_{i=1}^N\dot{a}_i=0$, this implies that $\mathfrak{h} \in C^1(\mathbb{T})$ and $\mathfrak{h}^{(4)}(x)=-\sum_{j=1}^N\dot{a}_j=0$ almost everywhere (except at the knots).     
    
Consequently, $\mathfrak{h}$ is a cubic polynomial on each subinterval between knots with $C^1$ continuity at the knots—in other words, a cubic Hermite spline with knots at $\{x_i\}$. This spline structure is the cornerstone of our efficient algorithm.

Let $\delta_{i_\pm} = |x_{i\pm1}-x_i|$ denote the distances between knot $x_i$ and its adjacent neighbors, and let $\mathfrak{h}_i = \mathfrak{h}(x_i)$ and $\mathfrak{h}'_i = \mathfrak{h}'(x_i)$ denote the values and first derivatives at the knots.
Classical spline theory \cite{Gautschi1997numerical} establishes that $\mathfrak{h}$ is uniquely determined by these nodal values:
\begin{equation}
    \label{h_spline}
    \mathfrak{h}(x) = p_{00}(s) \mathfrak{h}_i + p_{10}(s) (x_{i+1}-x_i) \mathfrak{h}'_i + p_{01}(s) \mathfrak{h}_{i+1} + p_{11}(s) (x_{i+1}-x_i) \mathfrak{h}'_{i+1},
\end{equation}
for $x\in[x_i,x_{i+1}]$ and $s=\frac{x-x_i}{x_{i+1}-x_i}\in[0,1]$. The basis functions are given by:
\begin{equation*}
    p_{00}(s) = 2s^3 - 3s^2 + 1,\; p_{10}(s) = s(s-1)^2,\; p_{01}(s) = -2s^3 + 3s^2,\; p_{11}(s) = s^2(s-1).
\end{equation*}

Furthermore, the definition \eqref{def_g} indicates that $g^{(3)}(x)$ has jump discontinuities at integer points, with $g^{(3)}(0^+) - g^{(3)}(0^-) = 1$. This translates to jump conditions for the derivatives of $\mathfrak{h}$ at the knots:
\begin{align*}
    \dot{a}_i &= \mathfrak{h}^{(3)}(x_i^+) - \mathfrak{h}^{(3)}(x_i^-),\\
    -a_i\dot{x}_i &= \mathfrak{h}^{(2)}(x_i^+) - \mathfrak{h}^{(2)}(x_i^-).
\end{align*}

These jump conditions provide the crucial link between the unknown parameters and the spline representation. Using the interpolation formula \eqref{h_spline}, we can express these conditions in terms of the nodal values.
\begin{align}
    \label{hDDDjump_expanded}
    \dot{a}_i &= \bigg(\frac{12(\mathfrak{h}_i-\mathfrak{h}_{i+1})}{\delta_{i_+}^3} + \frac{6(\mathfrak{h}'_i+\mathfrak{h}'_{i+1})}{\delta_{i_+}^2}\bigg) - \bigg(\frac{12(\mathfrak{h}_{i-1}-\mathfrak{h}_i)}{\delta_{i_-}^3} + \frac{6(\mathfrak{h}'_{i-1}+\mathfrak{h}'_i)}{\delta_{i_-}^2}\bigg)\\
    \label{hDDjump_expanded}
    -a_i\dot{x}_i &= \bigg(\frac{6(\mathfrak{h}_{i+1}-\mathfrak{h}_i)}{\delta_{i_+}^2} - \frac{(4\mathfrak{h}'_{i}+2\mathfrak{h}'_{i+1})}{\delta_{i_+}}\bigg) - \bigg(\frac{6(\mathfrak{h}_{i-1}-\mathfrak{h}_{i})}{\delta_{i_-}^2} + \frac{(4\mathfrak{h}'_{i}+2\mathfrak{h}'_{i-1})}{\delta_{i_-}}\bigg)
\end{align}

\subsubsection{Block Tridiagonal Linear System}
Having established the cubic spline structure of $\mathfrak{h}$, we now derive the block tridiagonal linear system \eqref{block_tridiag_full} by expressing the jump conditions \eqref{hDDDjump_expanded}\eqref{hDDjump_expanded} in terms of $\mathfrak{D}_i = \mathfrak{D}(x_i)$ and $\mathfrak{D}'_i = \mathfrak{D}'(x_i)$.

Substituting the relations \eqref{opt_a_x_F} into the left-hand sides of the jump conditions, and expressing $\mathfrak{h}_i = \mathfrak{F}_i - \mathfrak{D}_i$ and $\mathfrak{h}'_i = \mathfrak{F}'_i - \mathfrak{D}'_i$ into the right-hand sides, we obtain:
\begin{align*}
    \frac{\mathfrak{D}_i}{\lambda_a} &= \bigg(\frac{12(\mathfrak{F}_i-\mathfrak{F}_{i+1})}{\delta_{i_+}^3} + \frac{6(\mathfrak{F}'_i+\mathfrak{F}'_{i+1})}{\delta_{i_+}^2}\bigg) - \bigg(\frac{12(\mathfrak{F}_{i-1}-\mathfrak{F}_i)}{\delta_{i_-}^3} + \frac{6(\mathfrak{F}'_{i-1}+\mathfrak{F}'_i)}{\delta_{i_-}^2}\bigg)\\
    -& \bigg(\frac{12(\mathfrak{D}_i-\mathfrak{D}_{i+1})}{\delta_{i_+}^3} + \frac{6(\mathfrak{D}'_i+\mathfrak{D}'_{i+1})}{\delta_{i_+}^2}\bigg) + \bigg(\frac{12(\mathfrak{D}_{i-1}-\mathfrak{D}_i)}{\delta_{i_-}^3} + \frac{6(\mathfrak{D}'_{i-1}+\mathfrak{D}'_i)}{\delta_{i_-}^2}\bigg),\\
    -a_i\frac{a_i\cdot\mathfrak{D}'_i}{\lambda_x} &= \bigg(\frac{6(\mathfrak{F}_{i+1}-\mathfrak{F}_i)}{\delta_{i_+}^2} - \frac{(4\mathfrak{F}'_{i}+2\mathfrak{F}'_{i+1})}{\delta_{i_+}}\bigg) - \bigg(\frac{6(\mathfrak{F}_{i-1}-\mathfrak{F}_{i})}{\delta_{i_-}^2} + \frac{(4\mathfrak{F}'_{i}+2\mathfrak{F}'_{i-1})}{\delta_{i_-}}\bigg)\\
    -& \bigg(\frac{6(\mathfrak{D}_{i+1}-\mathfrak{D}_i)}{\delta_{i_+}^2} - \frac{(4\mathfrak{D}'_{i}+2\mathfrak{D}'_{i+1})}{\delta_{i_+}}\bigg) + \bigg(\frac{6(\mathfrak{D}_{i-1}-\mathfrak{D}_{i})}{\delta_{i_-}^2} + \frac{(4\mathfrak{D}'_{i}+2\mathfrak{D}'_{i-1})}{\delta_{i_-}}\bigg).
\end{align*}

Rearranging terms to place all unknowns ($\mathfrak{D}_i, \mathfrak{D}'_i$) on one side yields the desired block tridiagonal system \eqref{block_tridiag_full}, where the $i$-th block row corresponds to the equations for knot $x_i$:
\begin{equation*}
    \mathbb{G}_{i,i-1}Z_{i-1} + \mathbb{G}_{i,i}Z_i + \mathbb{G}_{i,i+1}Z_{i+1} = R_i.
\end{equation*}

The coefficient block matrices $\mathbb{G}_{i,j}\in\mathbb{R}^{2d\times 2d}$ are given by:
\begin{align*}
    \mathbb{G}_{i,i} &= \begin{bmatrix}
        12\big(1/\delta_{i_+}^{3}+1/\delta_{i_-}^{3}\big)&6\big(1/\delta_{i_+}^{2}-1/\delta_{i_-}^{2}\big)\\
        6\big(1/\delta_{i_+}^{2}-1/\delta_{i_-}^{2}\big)&4\big(1/\delta_{i_+}+1/\delta_{i_-}\big)
    \end{bmatrix}\otimes\mathbb{I}_d+\begin{bmatrix}
        \frac{1}{\lambda_a}\mathbb{I}_d&0\\
        0&\frac{1}{\lambda_x}a_ia_i^T
    \end{bmatrix},\\
    \mathbb{G}_{i,i-1} &= \begin{bmatrix}
        -12/\delta_{i_-}^{3}&-6/\delta_{i_-}^{2}\\
        6/\delta_{i_-}^{2}&2/\delta_{i_-}
    \end{bmatrix}\otimes\mathbb{I}_d,\quad
    \mathbb{G}_{i,i+1} = \begin{bmatrix}
        -12/\delta_{i_+}^{3}&6/\delta_{i_+}^{2}\\
        -6/\delta_{i_+}^{2}&2/\delta_{i_+}
    \end{bmatrix}\otimes\mathbb{I}_d.
\end{align*}
where $\mathbb{I}_d$ denotes the identity matrix, and $\otimes$ denotes the Kronecker product.

The right-hand side vectors $R_i \in \mathbb{R}^{2d}$ are expressed as:
\begin{equation}
    \label{def_Ri}
    R_i = \begin{bmatrix}
        \Big(\frac{12(\mathfrak{F}_i-\mathfrak{F}_{i+1})}{\delta_{i_+}^3} + \frac{6(\mathfrak{F}'_i+\mathfrak{F}'_{i+1})}{\delta_{i_+}^2}\Big) - \Big(\frac{12(\mathfrak{F}_{i-1}-\mathfrak{F}_i)}{\delta_{i_-}^3} + \frac{6(\mathfrak{F}'_{i-1}+\mathfrak{F}'_i)}{\delta_{i_-}^2}\Big)\\
        \Big(\frac{6(\mathfrak{F}_i-\mathfrak{F}_{i+1})}{\delta_{i_+}^2} + \frac{(4\mathfrak{F}'_{i}+2\mathfrak{F}'_{i+1})}{\delta_{i_+}}\Big) - \Big(\frac{6(\mathfrak{F}_{i}-\mathfrak{F}_{i-1})}{\delta_{i_-}^2} - \frac{(4\mathfrak{F}'_{i}+2\mathfrak{F}'_{i-1})}{\delta_{i_-}}\Big)
    \end{bmatrix}.
\end{equation}

The computational algorithm proceeds in three efficient stages: first, numerically evaluating the right-hand side vectors $R_i$; second, solving the block tridiagonal system \eqref{block_tridiag_full} to obtain $\mathfrak{D}_i$ and $\mathfrak{D}'_i$; and finally, recovering the temporal derivatives $\dot{a}_i$ and $\dot{x}_i$ using the simple relations in \eqref{opt_a_x_F}.

\subsubsection{Efficient Evaluation of \texorpdfstring{$R_i$}{Ri}}
The final element of the fast solver is an efficient method for computing the right-hand side vectors $R_i$. Although $\mathfrak{F}(\xi)$ is defined by an integral over the whole domain, the terms required for $R_i$ can be computed using only local information.

We begin by introducing the notation:
\begin{equation}
    \label{def_Fhat}
    \begin{aligned}
        \widehat{\mathfrak{F}}^{(3)}_{i+1/2} &:= \frac{12(\mathfrak{F}_i-\mathfrak{F}_{i+1})}{\delta_{i_+}^3} + \frac{6(\mathfrak{F}'_i+\mathfrak{F}'_{i+1})}{\delta_{i_+}^2},\qquad
        \widehat{\mathfrak{F}}^{(2)}_{i+1/2} := \frac{\mathfrak{F}'_{i+1} - \mathfrak{F}'_i}{\delta_{i_+}},
    \end{aligned}
\end{equation}
which allows the right-hand side $R_i$ to be compactly written as:
\begin{equation*}
    R_i = \begin{bmatrix}
        \widehat{\mathfrak{F}}_{i+1/2}^{(3)} - \widehat{\mathfrak{F}}_{i-1/2}^{(3)}\\
        -\left(\widehat{\mathfrak{F}}_{i+1/2}^{(2)} - \widehat{\mathfrak{F}}_{i+1/2}^{(3)}\cdot\delta_{i_+}/2\right) + \left(\widehat{\mathfrak{F}}_{i-1/2}^{(2)} + \widehat{\mathfrak{F}}_{i-1/2}^{(3)}\cdot\delta_{i_-}/2\right)
    \end{bmatrix}.
\end{equation*}

The reproducing property of $\varphi$ established in Proposition \ref{prop:RKHS} provides a key simplification:
\begin{equation}
    \label{equ:F'}
    \mathfrak{F}'(\xi) = \int_{\T} \varphi'(\xi - x) \partial_x f(q(x)) \d x = f(q(\xi)) - \overline{f(q)},
\end{equation}
revealing that $\mathfrak{F}$ serves as an antiderivative of $f(q)$ up to an additive constant.

Using integration by parts, we find:
\begin{align*}
    &\int_0^1 6s(1-s)\cdot\mathfrak{F}'''(x_i+s\delta_{i_+})\,\d s = \int_0^1 6s(1-s)\d\left[\mathfrak{F}''(x_i+s\delta_{i_+})/\delta_{i_+}\right]\\
    =& -\left[6s(1-s)\mathfrak{F}''(x_i+s\delta_{i_+})/\delta_{i_+}\right]_0^1 - \int_0^1 6(1-2s)\mathfrak{F}''(x_i+s\delta_{i_+})/\delta_{i_+}\,\d s\\
    =&\;0-\left[6(1-2s)\mathfrak{F}'(x_i+s\delta_{i_+})/\delta_{i_+}^2\right]_0^1 - 12\int_0^1 \mathfrak{F}'(x_i+s\delta_{i_+})/\delta_{i_+}^2\,\d s\\
    =&\;6\big(\mathfrak{F}'(x_i+\delta_{i_+})+\mathfrak{F}'(x_i)\big)/\delta_{i_+}^2 - 12\big(\mathfrak{F}(x_i+\delta_{i_+})-\mathfrak{F}(x_i)\big)/\delta_{i_+}^3 = \widehat{\mathfrak{F}}_{i+\frac12}^{(3)}.
\end{align*}

Since our solution representation $q(x)$ is piecewise linear between consecutive knots, combined with \eqref{equ:F'}, we have:
\begin{equation*}
    \mathfrak{F}'''(x_i+s\delta_{i_+}) = \partial_s^2 \big(q(x_i+s\delta_{i_+})\big)/\delta_{i_+}^2 = \partial_s^2 f\big(q_i + s(q_{i+1}-q_i)\big)/\delta_{i_+}^2,
\end{equation*}
which can be evaluated analytically given the explicit form of the flux function $f$.

Combining these results, the components of $R_i$ reduce to expressions that depend only on local quantities:
\begin{equation*}
    \begin{aligned}
        \widehat{\mathfrak{F}}_{i+1/2}^{(3)} &= \int_0^1 6s(1-s)\cdot\partial_s^2 f\big(q_i + s(q_{i+1}-q_i)\big)/\delta_{i_+}^2\;\d s,\\
        \widehat{\mathfrak{F}}_{i+1/2}^{(2)} &= \big(f(q_{i+1}) - f(q_{i})\big)/\delta_{i_+}.
    \end{aligned}
\end{equation*}

These integral expressions can be evaluated accurately using standard numerical quadrature methods, applied simultaneously to all subintervals between knots. 
The key advantage is that the entire right-hand side computation becomes highly efficient, requiring only local evaluations rather than global integrations. 
This completes the derivation of the fully fast solver, transforming a seemingly complex and dense optimization problem into a sparse, efficiently solvable linear system.

\section{Theoretical Perspective for Scalar Conservation Laws}
\label{sec:analysis}

This section addresses the central conceptual challenge of the SPIKE method: how can an algorithm that minimizes the strong-form PDE residual correctly capture weak solutions containing shocks? 
From a classical viewpoint, the strong-form PDE residual is undefined at discontinuities, which raises fundamental questions about the stability and coherence of the parameter evolution as shocks form.
Nevertheless, our numerical results show that the method robustly handles shock dynamics.

We resolve this paradox with a two-part analysis.
First, we use an analytical example to show how Tikhonov regularization maintains smooth and stable parameter evolution that bridges pre-shock and post-shock dynamics.
Second, we analyze the adaptive behavior of the kernel positions, showing how the unified optimization framework naturally transitions from tracking characteristics in smooth regions to satisfying the Rankine-Hugoniot jump condition across shocks.
These findings show that the combination of a dynamic representation and a regularized evolution law overcomes the classical limitations of strong-form methods.

\subsection{Regularization Analysis}
\label{Zigzag}
To understand how regularization enables shock capturing, we conduct a precise mathematical analysis using an exact solution to Burgers' equation $\partial_t u + \partial_x(u^2/2) = 0$. This approach eliminates approximation errors and allows us to isolate the pure effects of regularization on parameter dynamics.

The exact zigzag solution is a piecewise linear profile given by:
\begin{equation}
    \label{true_zigzag_solution}
    u^*(x,t) = \begin{cases}
        -H^*(t) \cdot \frac{x}{\delta^*(t)}, & x \in (-\delta^*(t), \delta^*(t)), \\
        -H^*(t) \cdot \frac{2x-1}{2\delta^*(t)-1}, & x \in (\delta^*(t), 1-\delta^*(t)),
    \end{cases}
\end{equation}
where the shape parameters evolve according to:
\begin{equation}
    \label{true_zigzag}
    H^*(t) = \frac{H_0}{1+2\max(H_0t-\delta_0, 0)}, \quad
    \delta^*(t) = \max(\delta_0-H_0t, 0),
\end{equation}
with initial conditions $H_0 = 1$ and $\delta_0 = 1/4$. 

This solution exhibits the classic shock formation scenario: the profile steadily steepens until $t = \delta_0/H_0 = 1/4$, at which point $\delta^*(t)$ reaches zero and a shock forms. The evolution is illustrated in Figure \ref{fig:zigzag}, where we can observe both the exact solution and SPIKE approximations with different regularization strengths.
\begin{figure}[ht]
    \centering
    \includegraphics[width=\textwidth]{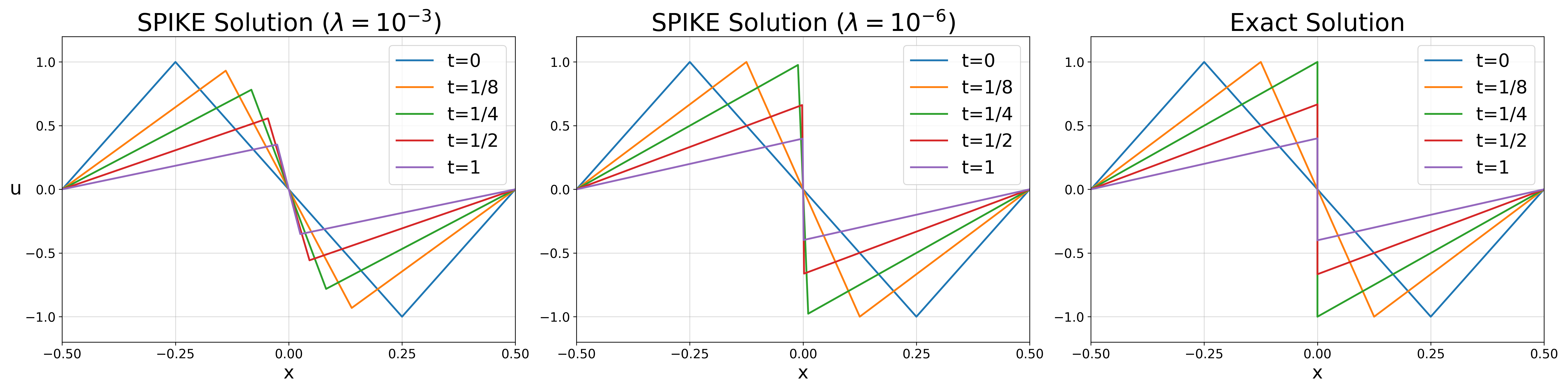}
    \caption{Evolution of the exact zigzag solution (right) and SPIKE approximations with different regularization strengths: $\lambda_a=\lambda_x=10^{-3}$ (left), $10^{-6}$ (middle).}
    \label{fig:zigzag}
\end{figure}

The crucial advantage of this example is that the zigzag profile \eqref{true_zigzag_solution} can be represented exactly within SPIKE's function space using only $N=2$ kernels positioned symmetrically about the origin, as illustrated in Figure \ref{fig:varphi}:
\begin{equation}
    \label{approx_jump}
    a\varphi(x+\delta) - a\varphi(x-\delta) = \begin{cases}
        -H\cdot\frac{x}{\delta},&x\in(-\delta,\delta),\\
        -H\cdot\frac{2x-1}{2\delta-1},&x\in(\delta,1-\delta),
    \end{cases}
\end{equation}
where $H=a\delta(1-2\delta)$. This perfect representability means that any discrepancies between SPIKE and the exact solution must arise purely from regularization effects, not approximation errors.

We parameterize the SPIKE solution as:
\begin{equation*}
    u(x;\theta(t)) = a(t)\varphi\big(x+\delta(t)\big) - a(t)\varphi\big(x-\delta(t)\big),
\end{equation*}
with initial conditions $a(0) = \frac{H_0}{\delta_0(1-2\delta_0)}=8$ and $\delta(0) = \delta_0=\frac{1}{4}$ to match the exact solution at $t=0$.

The SPIKE optimization problem \eqref{SPIKE_EDNN} becomes:
\begin{equation}
    \label{SPIKE_EDNN_zigzag}
    \min_{\dot{a},\dot{\delta}} \int_{\T}\left(\partial_t u + \partial_x(u^2/2)\right)^2 \d x + 2\lambda_a\dot{a}^2 + 2\lambda_x\dot{\delta}^2.
\end{equation}

Through analytical computation, we obtain the parameter evolution dynamics:
\begin{align}
    \label{equ:zigzag_a}
    \dot{a} &= \frac{aH^3(1-4\delta)}{H^3/a + 3\lambda_a(1-6\delta+12\delta^2)a^2 + 3\lambda_xH^2/a^2 + 18\lambda_a\lambda_x}, \\
    \label{equ:zigzag_d}
    \dot{\delta} &= \frac{-H^4/a - 6\lambda_a aH^2}{H^3/a + 3\lambda_a(1-6\delta+12\delta^2)a^2 + 3\lambda_xH^2/a^2 + 18\lambda_a\lambda_x}.
\end{align}

The evolution of $H = a\delta(1-2\delta)$ follows from the chain rule:
\begin{equation}
    \label{equ:zigzag_H}
    \dot{H} = \frac{-6\lambda_a a^2 H^2(1-4\delta)}{H^3/a+3\lambda_a(1-6\delta+12\delta^2)a^2+3\lambda_xH^2/a^2+18\lambda_a\lambda_x}.
\end{equation}

We now analyze the behavior of this system in three scenarios.

\subsubsection{Unregularized Case}

When amplitude regularization is absent ($\lambda_a = 0$), equation \eqref{equ:zigzag_H} immediately yields $\dot{H} = 0$, forcing $H(t) \equiv H_0$ to remain constant throughout the evolution.
This allows us to write $a = H_0/[\delta(1-2\delta)]$, reducing the evolution \eqref{equ:zigzag_d} for $\delta(t)$ to:
\begin{equation*}
    \dot{\delta} = \frac{-H_0}{1+\frac{3\lambda_x}{H_0^2}\delta(1-2\delta)}.
\end{equation*}

Therefore, $\delta(t)$ decreases monotonically and reaches zero at the finite time:
\begin{equation*}
    T = \frac{1}{H_0}\int_0^{\delta_0}\left(1+\frac{3\lambda_x}{H_0^2}\delta(1-2\delta)\right)\d\delta < \infty.
\end{equation*}

As $t \to T^-$, $\delta(t) \to 0$, causing the amplitude $a(t)=\frac{H_0}{\delta(t)(1-2\delta(t))} \to +\infty$. This analysis shows that without amplitude regularization, the parameter evolution may exhibit a finite-time blowup.

\begin{remark}
When all regularization is removed ($\lambda_a = \lambda_x = 0$), the parameter ODE simplifies to $\dot{\delta} = -H_0$ and $\dot{a} = (1-4\delta)a^2/H_0$, which gives the precise evolution equations for the true solution parameters in \eqref{true_zigzag} before shock formation.
This perfect agreement occurs because the true solution lies within SPIKE's representation space and achieves zero PDE residual loss, making it the natural solution to the unregularized optimization problem. The subsequent parameter blowup at $t = 1/4$ is the mathematical signature of shock formation.
\end{remark}

\subsubsection{Regularized Case}

With positive amplitude regularization, the system's behavior alters fundamentally.
We now prove that all parameters remain bounded for all time, preventing the finite-time singularity observed in the unregularized case.
From the evolution equations, $\dot{\delta} \le 0$ ensures $\delta(t) \leq \delta_0 = 1/4$, while $\dot{H} \leq 0$ gives $H(t) \leq H_0 = 1$, and $\dot{a} \geq 0$ yields $a(t) \geq a_0 = 8$.

The crucial step is proving that $a(t)$ remains bounded above, thereby preventing any singularity formation. We begin by bounding the dynamics of $H(t)$ using equation \eqref{equ:zigzag_H}. Since $\delta(t) \in [0, 1/4]$, we find:
\begin{equation*}
    \dot{H}\ge\frac{-6\lambda_a a^2 H^2(1-4\delta)}{3\lambda_a(1-6\delta+12\delta^2)a^2}=\frac{-2(1-4\delta)}{1-6\delta+12\delta^2}\cdot H^2\ge -4H^2.
\end{equation*}
Integrating this differential inequality yields the decay estimate $H(t) \leq 1/(4t+1/H_0)$.

For the amplitude, equation \eqref{equ:zigzag_a} gives:
\begin{equation*}
    \dot{a}\le\frac{aH^3(1-4\delta)}{3\lambda_a(1-6\delta+12\delta^2)a^2}\le\frac{2H^3}{3\lambda_a a}.
\end{equation*}

Multiplying by $a$ and integrating yields an upper bound on the amplitude:
\begin{equation}
    \label{equ:a_bound}
    a(t)^2\le a_0^2+\frac{4}{3\lambda_a}\int_0^t H(s)^3\d s\le a_0^2+\frac{4}{3\lambda_a}\int_0^\infty(4s+1/H_0)^3\d s<\infty.
\end{equation}
Since the integral is finite, $a(t)$ is bounded by a constant of order $O\big(\lambda_a^{-1/2}\big)$. This analysis confirms that positive regularization prevents the finite-time blowup and ensures the global existence of the parameter evolution.

\subsubsection{Vanishing Regularization Limit}
Having established global existence for the regularized case, we investigate the limiting behavior as regularization vanishes. As shown in Figure \ref{fig:zigzag_spike}, the evolution of the SPIKE parameters $H(t)$ and $\delta(t)$ converge to the true, non-smooth solution evolution given by \eqref{true_zigzag}.

This convergence cleanly illustrates the core mechanism of SPIKE: while the true solution exhibits a kink at shock formation, the regularized evolution follows a continuous path that smoothly transitions through this critical point. 
\begin{figure}[ht]
    \centering
    \includegraphics[width=0.8\textwidth]{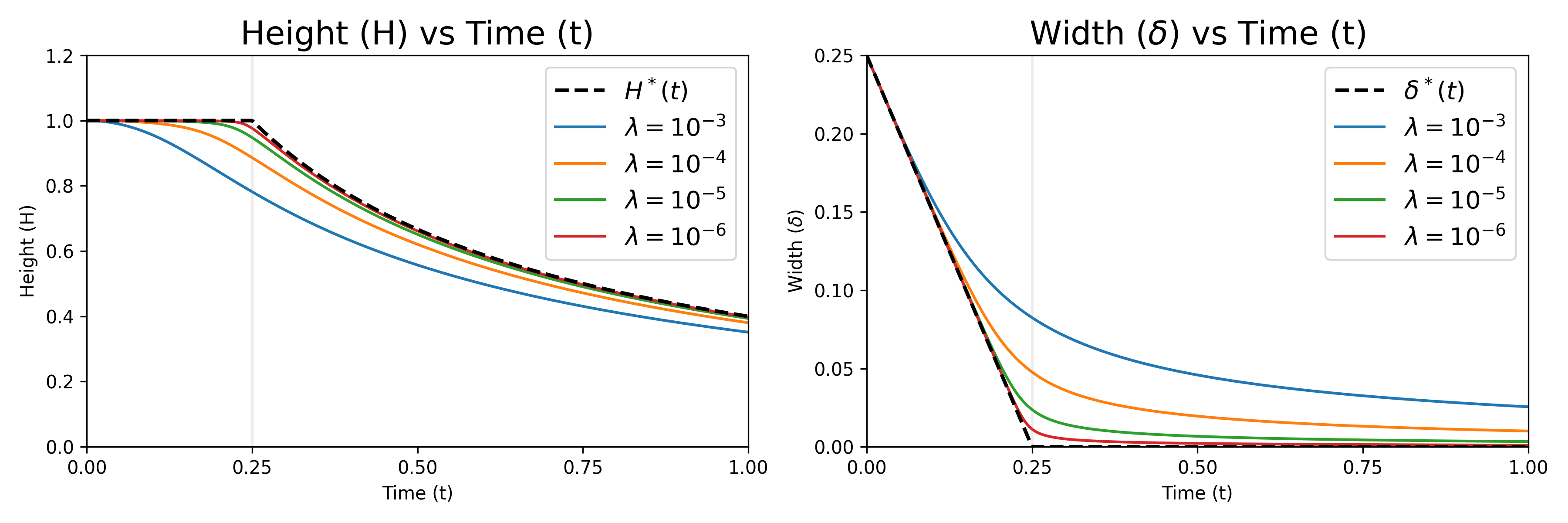}
    \caption{Convergence of SPIKE shape parameters $H(t)$ and $\delta(t)$ to the exact solution (dashed lines) as the regularization strength ($\lambda_a=\lambda_x=\lambda$) vanishes.}
    \label{fig:zigzag_spike}
\end{figure}

\begin{remark}
This analysis reveals a deep structural analogy between SPIKE's regularization and the classical vanishing viscosity method for conservation laws.

In the context of SPIKE, our analysis has established three key behaviors:

\begin{enumerate}[(i)]
\item \textbf{Without regularization}: Parameter evolution exhibits finite-time blowup;
\item \textbf{With positive regularization}: Global solutions exist for all parameters;
\item \textbf{Vanishing regularization limit}: Solutions recover the correct shock dynamics.
\end{enumerate}

These points closely mirror the established theory for viscous conservation laws of the form $\partial_t q + \partial_x f(q) = \nu \partial_{xx} q$:

\begin{enumerate}[(i)]
\item \textbf{Without viscosity}: Strong solutions terminate at shock formation;
\item \textbf{With positive viscosity}: Global strong solutions exist due to dissipation;
\item \textbf{Vanishing viscosity limit}: Solutions approach the entropy solution.
\end{enumerate}

Despite this powerful analogy, the underlying mechanisms are fundamentally different and cannot be made equivalent through algebraic manipulation. 
A key distinction emerges in their long-time behavior:

\textbf{Viscous solutions} inevitably become smoother over time:
\begin{equation*}
    \lim_{t\to\infty}\sup_x |\partial_x u(x,t)| = 0 \quad \text{for any } \nu > 0.
\end{equation*}

\textbf{SPIKE solutions} maintain sharp profiles:
\begin{equation*}
    \lim_{t\to\infty}\sup_x |\partial_x u(x;\theta(t))| = \lim_{t\to\infty} a(t)\big(1-2\delta(t)\big) = O\big(\lambda_a^{-1/2}\big) \quad \text{for } \lambda_a > 0.
\end{equation*}

This demonstrates that SPIKE's regularization operates on the parameter dynamics, not by adding dissipation to the solution itself. Moreover, at the vanishing regularization limit, $\lambda_a \to 0^+$ and $\lambda_a^{-1/2} \to +\infty$, the maintained steepness recovers the sharp discontinuities of the true shock solution. 

The broader relationships between regularization and viscosity mechanisms, as well as whether SPIKE solutions generally satisfy entropy conditions beyond this specific zigzag example, warrant further investigation in future work.
\end{remark}

\subsection{Knot Dynamics and Shock Capturing Mechanism}

The SPIKE method exhibits a profound theoretical property that lies at the heart of resolving our central paradox. A single, unified dynamical system—derived purely from minimizing the strong-form PDE residual with Tikhonov regularization—automatically adapts its behavior based on the local solution structure without any explicit programming of shock physics.

This adaptive behavior manifests in two distinct phases. 
In smooth regions, individual knots track characteristic curves of the hyperbolic system. 
After shocks emerge, they spontaneously reorganize, cluster tightly around discontinuities and collectively propagate at precisely the Rankine-Hugoniot speed. 
The transition between these behaviors occurs naturally through the evolution dynamics, without shock detection algorithms or manual switching between different numerical schemes.

To understand how this seemingly impossible behavior arises, we develop a heuristic analysis of knot dynamics in scalar conservation laws. Our approach examines the vanishing regularization limit ($\lambda_a, \lambda_x \to 0^+$) and the continuum limit ($N \to \infty$), revealing the mathematical mechanism underlying SPIKE's dual personality.

Figure \ref{fig:burgers_knots} provides compelling visual evidence of these phenomena for Burgers' equation with initial condition $u_0(x) = \sin(2\pi x)+0.5$. The solution evolution and knot trajectories clearly show the behavioral transition we analyze below.

\begin{figure}[ht]
    \centering
    \includegraphics[width=0.8\textwidth]{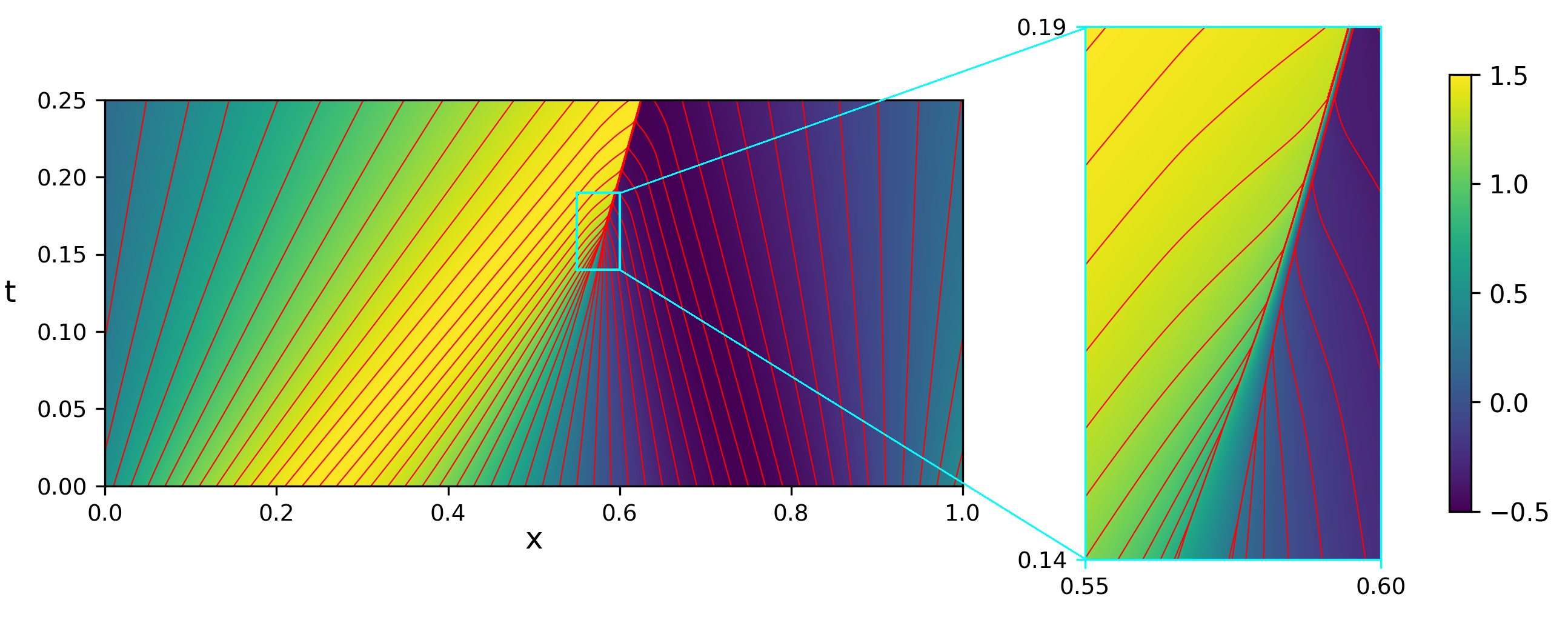}
    \caption{SPIKE solution of Burgers' equation showing the dual nature of knot dynamics. The background contour displays the solution field while red lines trace knot positions over time. Initially, knots move independently along characteristic curves. After shock formation, they spontaneously cluster into two distinct groups that bracket the shock location and move together at the Rankine-Hugoniot speed.}
    \label{fig:burgers_knots}
\end{figure}

\subsubsection{Phase I: Characteristic Following in Smooth Regions}

In the vanishing regularization limit ($\lambda \to 0^+$), the optimality conditions \eqref{opt_a_x_F} therefore require $\mathfrak{D}_i = \mathfrak{D}'_i = 0$, which immediately implies $\mathfrak{h}_i=\mathfrak{F}_i$ and $\mathfrak{h}_i'=\mathfrak{F}_i'$ throughout the domain. Substituting these conditions into \eqref{hDDjump_expanded} yields:
\begin{align}
    \label{hFjump}
    -a_i\dot{x}_i &= \bigg(\frac{6(\mathfrak{F}_{i+1}-\mathfrak{F}_i)}{\delta_{i_+}^2} - \frac{(4\mathfrak{F}'_{i}+2\mathfrak{F}'_{i+1})}{\delta_{i_+}}\bigg) - \bigg(\frac{6(\mathfrak{F}_{i-1}-\mathfrak{F}_{i})}{\delta_{i_-}^2} + \frac{(4\mathfrak{F}'_{i}+2\mathfrak{F}'_{i-1})}{\delta_{i_-}}\bigg).
\end{align}

Since the SPIKE solution $q\big(x;\theta(t)\big)$ is piecewise linear and the flux $f$ is smooth, the function $\mathfrak{F}$, with $\mathfrak{F}' = f(q)-\overline{f(q)}$, is also smooth between knots. 
We can therefore apply the Taylor expansion around $x_i$:
\begin{align*}
    \mathfrak{F}_{i\pm1} &= \mathfrak{F}(x_i^\pm) \pm \mathfrak{F}'(x_i^\pm)\cdot\delta_{i_\pm} + \mathfrak{F}''(x_i^\pm)\cdot\delta_{i_\pm}^2/2 \pm \mathfrak{F}'''(x_i^\pm)\cdot\delta_{i_\pm}^3/6 + O(\delta_{i_\pm}^4),\\
    \mathfrak{F}'_{i\pm1} &= \mathfrak{F}'(x_i^\pm) \pm \mathfrak{F}''(x_i^\pm)\cdot\delta_{i_\pm} + \mathfrak{F}'''(x_i^\pm)\cdot\delta_{i_\pm}^2/2 + O(\delta_{i_\pm}^3).
\end{align*}

Since $\mathfrak{F}''(x) = f'\big(q(x)\big)\cdot \partial_x q(x)$, substituting these expansions into \eqref{hFjump} yields:
\begin{equation*}
    -a_i\dot{x}_i=f'\big(q(x_i)\big)\cdot\big(\partial_x q(x_i^+) - \partial_x q(x_i^-)\big) + O(\delta_{i_+}^2 + \delta_{i_-}^2).
\end{equation*}

Recalling from \eqref{eq:amp_second_derivative} that $a_i = -\partial_x q(x_i^+)+\partial_x q(x_i^-)$, we obtain:
\begin{equation*}
    \dot{x}_i = f'\big(q(x_i)\big) + O(\delta_{i_+}^2 + \delta_{i_-}^2) / a_i.
\end{equation*}

In the continuum limit $N\to\infty$, the spacing $\delta_{i_\pm}$ and the amplitude $a_i$ typically scale as $O(N^{-1})$, resulting in:
\begin{equation*}
    \dot{x}_i = f'\big(q(x_i)\big) + O(1/N).
\end{equation*}

This fundamental result reveals the first part of SPIKE's dual nature. The knots automatically move at the characteristic speed of the scalar conservation law, despite the fact that we never explicitly programmed the method of characteristics.

\subsubsection{Phase II: Collective Shock Propagation}

As characteristics converge, the knots following them naturally cluster and form tightly-packed groups on both sides of the discontinuity. 
Motivated by the trajectories observed in Figure \ref{fig:burgers_knots}, we model this clustered configuration using an effective two-kernel representation:
\begin{equation}  
    \label{eq:shock_model}  
    q\big(x;\theta(t)\big) = \underbrace{a(t)\varphi\big(x-x_s(t)+\delta(t)\big) - a(t)\varphi\big(x-x_s(t)-\delta(t)\big)}_{\text{effective shock representation}} + \underbrace{\tilde{q}\big(x;\tilde{\theta}(t)\big)}_{\text{smooth background}}.
\end{equation}

Here, $x_s(t)$ represents the shock position, the parameters $a(t)$ and $\delta(t)$ define the jump $H = a\delta(1-2\delta)$, and the smooth background term $\tilde{q}\big(x;\tilde{\theta}(t)\big)$ accounts for the contribution from other kernels far from the shock.

In the vanishing regularization limit, \eqref{SPIKE_EDNN} simplifies to pure residual minimization:  
\begin{equation}  
    \label{eq:shock_opt}  
    \min_{\dot{a},\dot{x}_s,\dot{\delta},\dot{\tilde{\theta}}} \int_{\T} \left|\partial_t q + \partial_x f(q)\right|^2 \d x,
\end{equation}  
where the time derivative of the effective model \eqref{eq:shock_model} is given by:
\begin{equation}  
    \label{eq:shock_time_derivative}  
    \begin{aligned}  
    \partial_t q \approx\;& \dot{a}\big(\varphi(x-x_s+\delta)-\varphi(x-x_s-\delta)\big) + a\dot{\delta}\big(\varphi'(x-x_s+\delta)+\varphi'(x-x_s-\delta)\big)\\  
    &-a\dot{x}_s\big(\varphi'(x-x_s+\delta)-\varphi'(x-x_s-\delta)\big) + \partial_t\tilde{q}.
    \end{aligned}  
\end{equation}

The first-order optimality condition with respect to the shock speed $\dot{x}_s$ is:
\begin{equation}
    \label{eq:opt_cond_shock}
    \begin{aligned}
        0=&\frac{\partial}{\partial \dot{x}_s} \int_{\T} \left|\partial_t q + \partial_x f(q)\right|^2 \d x = \int_{\T} 2\,\frac{\partial(\partial_t q)}{\partial \dot{x}_s}\cdot\big(\partial_t q + \partial_x f(q)\big)\d x\\
        =&\int_{\T} -2a\big(\varphi'(x-x_s+\delta)-\varphi'(x-x_s-\delta)\big)\cdot\big(\partial_t q + \partial_x f(q)\big)\d x.
    \end{aligned}
\end{equation}

From definition of $\varphi$ in \eqref{def_varphi}, we have $\varphi'(x) = \{x\} - 1/2$, which gives:
\begin{equation*}
    \varphi'(x-x_s+\delta)-\varphi'(x-x_s-\delta) = \begin{cases}
        2\delta-1, & x \in (x_s-\delta, x_s+\delta),\\
        2\delta, & \text{otherwise}.
    \end{cases}
\end{equation*}

Substituting into the optimality condition \eqref{eq:opt_cond_shock} gives:
\begin{equation*}
    2\delta\int_{\T}\big(\partial_t q + \partial_x f(q)\big)\d x - \int_{x_s-\delta}^{x_s+\delta} \big(\partial_t q + \partial_x f(q)\big)\d x = 0.
\end{equation*}

Due to the conservation property established in Section \ref{sec:conservation}, the first integral vanishes, leaving:
\begin{equation}
    \label{eq:shock_integral_condition}
    \int_{x_s-\delta}^{x_s+\delta} \big(\partial_t q + \partial_x f(q)\big)\d x = 0
\end{equation}

Substituting the time derivative \eqref{eq:shock_time_derivative} and evaluating this integral over the narrow shock region by the Newton-Leibniz formula, we obtain:
\begin{equation*}
    2a\delta(1-2\delta)\cdot\dot{x}_s + f(q)\Big|_{x_s-\delta}^{x_s+\delta} + \int_{x_s-\delta}^{x_s+\delta} \partial_t\tilde{q}\,\d x = 0.
\end{equation*}

In the sharp shock limit where $|a| \to \infty$ and $\delta \to 0$ while maintaining finite jump magnitude $H = a\delta(1-2\delta)$, the last integral involving the smooth background $\tilde{q}$ becomes negligible. 
This yields the classical Rankine-Hugoniot condition:  
\begin{equation*}  
    f\big(q(x_s^+)\big) - f\big(q(x_s^-)\big) = \big(q(x_s^+) - q(x_s^-)\big)\cdot \dot{x}_s.
\end{equation*}

This analysis provides a complete resolution to our central paradox. The reproducing kernel $\varphi$ acts as a mathematical translator: its reproducing property ensures that domain-wide integrals of the strong-form residual in \eqref{eq:shock_opt} automatically concentrate on regions with sharp gradients, as illustrated by \eqref{eq:shock_integral_condition}. This emergent property reveals a deep and previously unexplored connection between variational physics-informed methods and the theory of hyperbolic conservation laws.

\section{Numerical Experiments}
\label{sec:experiments}
This section evaluates SPIKE's shock-capturing performance across various conservation laws, from scalar equations to vector-valued systems. 
Our experiments demonstrate the method's universal effectiveness without problem-specific modifications, using the same minimal formulation (typically $N=500$ kernels, $\lambda_a = \lambda_x = 10^{-7}$). We compare against Physics-Informed Neural Networks (PINN) \cite{raissi2019physics} and Evolutional Deep Neural Network (EDNN) \cite{EDNN}, with high-resolution finite-volume solutions as references.

\subsection{Burgers' Equation}
The inviscid Burgers' equation $$\partial_t u + \partial_x(u^2/2) = 0$$ provides an ideal testing ground due to its simple nonlinearity that generates complex shock interactions. We examine the initial condition $u_0(x)=\cos(2\pi x)-\cos(4\pi x)$, which creates two independent shocks that propagate, collide, and merge into a single stronger shock.

Figure \ref{fig:bimode_results} reveals dramatic performance differences. SPIKE produces remarkably sharp shock profiles that closely match the reference solution throughout shock formation, propagation, and interaction without oscillations. 
The error remains tightly localized around discontinuities, indicating precise shock resolution. 
In contrast, both competing methods exhibit fundamental difficulties after shock formation. PINN's space-time neural network severely over-smooths discontinuities, while EDNN generates widespread oscillatory artifacts that contaminate the entire solution domain. 

\begin{figure}[ht]
    \centering
    \includegraphics[width=\textwidth]{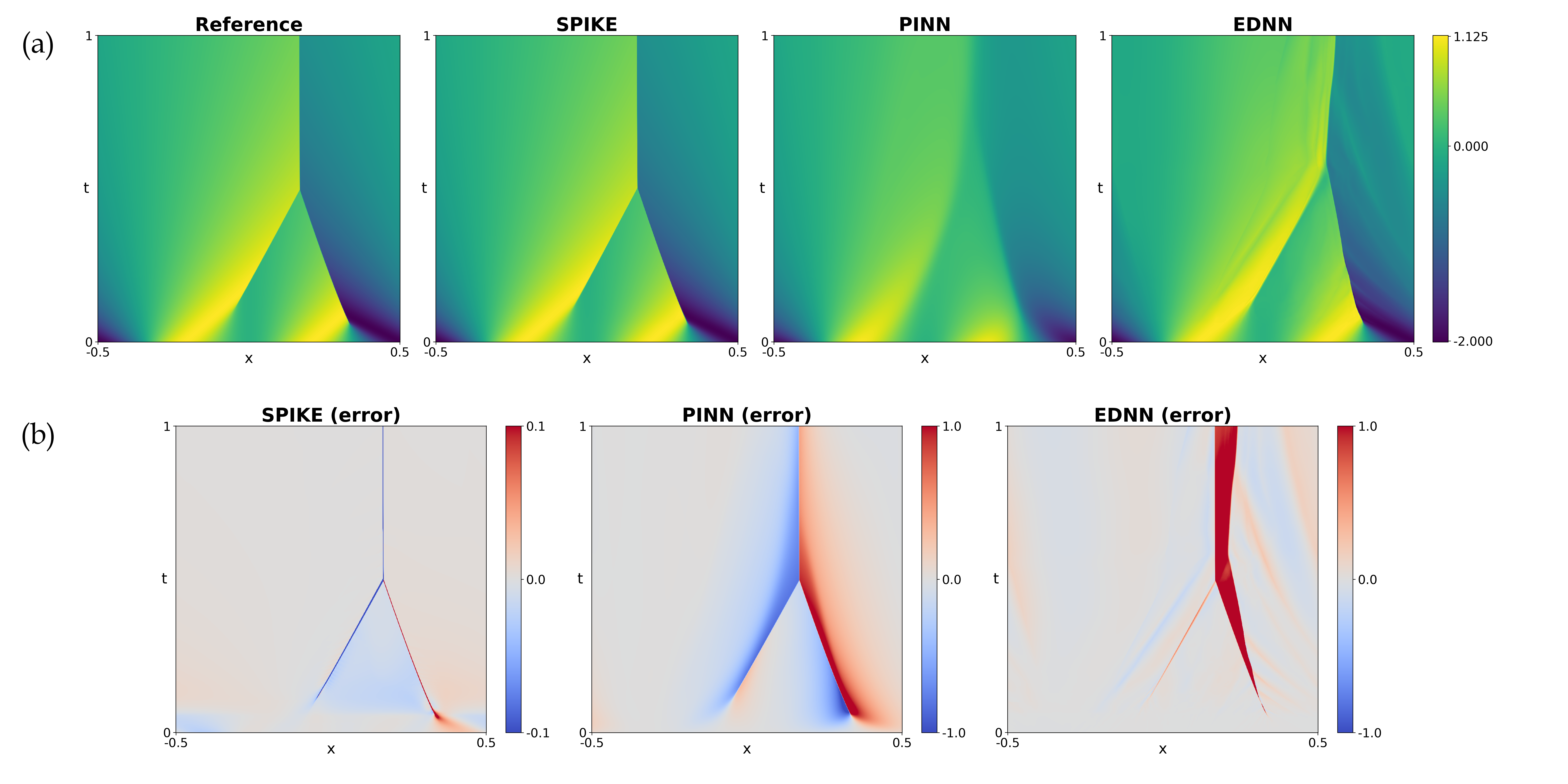}
    \caption{(a) Solution contours of Burgers' equation using SPIKE, PINN, and EDNN, compared to a high-resolution reference. (b) Pointwise error relative to the reference solution.}
    \label{fig:bimode_results}
\end{figure}

\subsection{Buckley-Leverett Equation}

We next examine the Buckley-Leverett equation that models two-phase flow in porous media: $$\partial_t u + \partial_x \big(\frac{u^2}{u^2+(1-u)^2}\big) = 0.$$
This problem presents greater complexity than Burgers' equation through its non-convex flux function. The initial condition $u_0(x) = \sin^4(\pi x)$ creates two shocks that propagate with varying speeds.

Figure \ref{fig:BL_results} confirms SPIKE's effectiveness extends to non-convex problems. The method preserves its shock-capturing accuracy without requiring flux-specific modifications, demonstrating the generality of the approach. The error patterns remain consistent: SPIKE produces narrow error bands at discontinuities while PINN and EDNN exhibit global solution contamination.

\begin{figure}[ht]
    \centering
    \includegraphics[width=\textwidth]{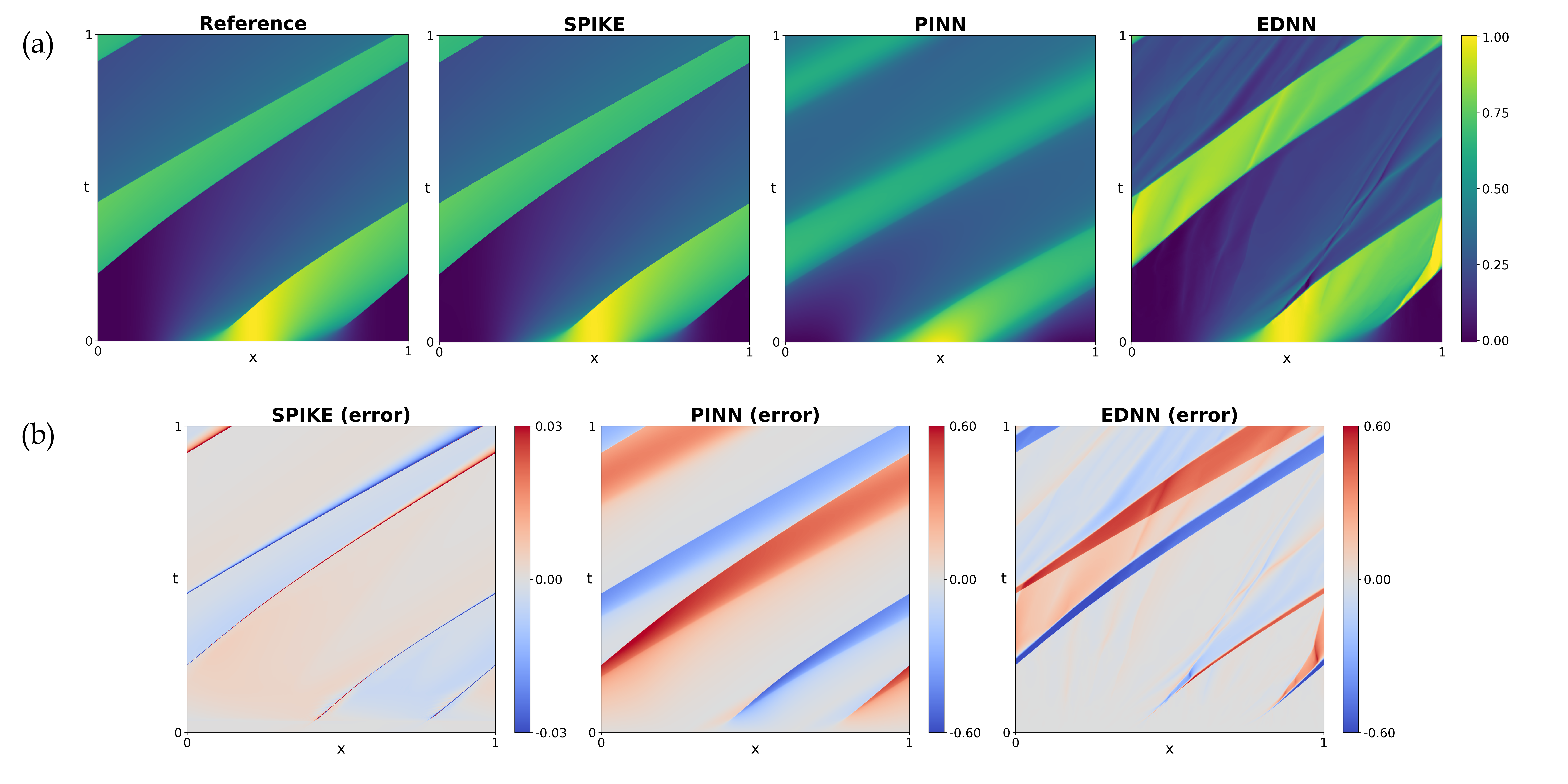}
    \caption{(a) Solution contours of the Buckley-Leverett equation using SPIKE, PINN, and EDNN, compared to a high-resolution reference. (b) Pointwise error relative to the reference solution.}
    \label{fig:BL_results}
\end{figure}

\subsection{Euler Equations}
Although the theoretical analysis in Section \ref{sec:analysis} focused on scalar conservation laws, the SPIKE framework naturally extends to vector-valued systems. To demonstrate this capability, we consider the Euler equations for inviscid compressible flow:
\begin{equation*}
    \partial_t\begin{bmatrix}
        \rho\\
        \rho v\\
        \rho E
    \end{bmatrix} + \partial_x\begin{bmatrix}
        \rho v\\
        \rho v^2 + p\\
        (\rho E + p)v
    \end{bmatrix} = 0,
\end{equation*}
where $\rho$, $v$, $p$, and $E = \frac{1}{2}v^2 + e(p,\rho)$ denote density, velocity, pressure, and total energy, respectively. The internal energy is given by $e(p,\rho) = \frac{p}{(\gamma - 1)\rho}$ with the adiabatic index $\gamma=1.4$. We initialize the system with smooth data that rapidly develops into complex shock patterns: $\rho_0(x) = 1, v_0(x) = 1, p_0(x) = 2 + \sin(2\pi x)$.

This problem presents a particularly demanding test case, as it involves the formation and interaction of multiple shock waves over an extended time horizon ($T = 5.0$). Such long-time simulations often reveal numerical instabilities and resolution issues that remain hidden in shorter runs. Given the substantial difficulties that PINN and EDNN exhibit even for scalar problems, we focus this evaluation exclusively on SPIKE's performance against a high-fidelity reference solution.

The adaptive clustering behavior observed in Figure \ref{fig:burgers_knots} is essential for shock resolution but may become problematic over long times, leaving regions with insufficient knot coverage. We therefore monitor the distribution quality via the effective knot count $N_{\text{eff}} := 1\big/{\sum_i |x_{i+1} - x_i|^2}$, which quantifies how uniformly the knots remain distributed: perfectly uniform spacing yields $N_{\text{eff}} = N$, while extreme clustering gives $N_{\text{eff}} \approx 1$.
When clustering reduces $N_{\text{eff}}$ below a critical threshold (e.g., $0.6N$), we redistribute the knots uniformly across the domain and reinitialize the amplitudes using the procedure from Section \ref{sec:init}. This redistribution strategy augments the method's adaptive capabilities over long time horizons.

The solution contours in Figure \ref{fig:euler_long_term} provide compelling evidence of SPIKE's robustness for complex vector systems. The method successfully accurately captures the intricate evolution of multiple interacting shocks while maintaining numerical stability. 
As shown in Figure \ref{fig:euler_error}, the $L^1(\T)$ error for each variable remains bounded and well-controlled throughout the entire simulation duration.
These results demonstrate SPIKE's effectiveness for challenging vector conservation laws and highlight its potential for demanding, long-term fluid dynamics applications.

\begin{figure}[ht]  
    \centering
    \includegraphics[width=\textwidth]{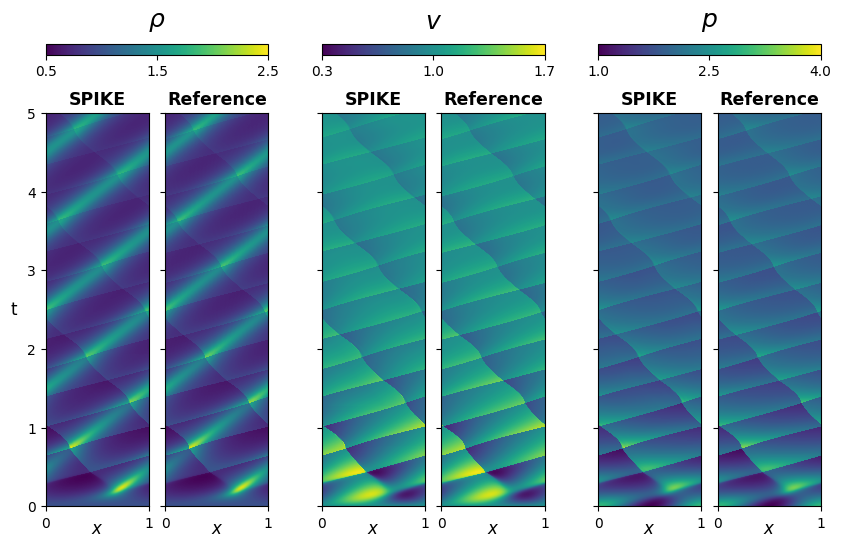}  
    \caption{Long-term evolution of the 1D Euler equations using SPIKE, compared to a high-resolution reference solution.}
    \label{fig:euler_long_term}
\end{figure}

\begin{figure}[ht]  
    \centering  
    \includegraphics[width=0.75\textwidth]{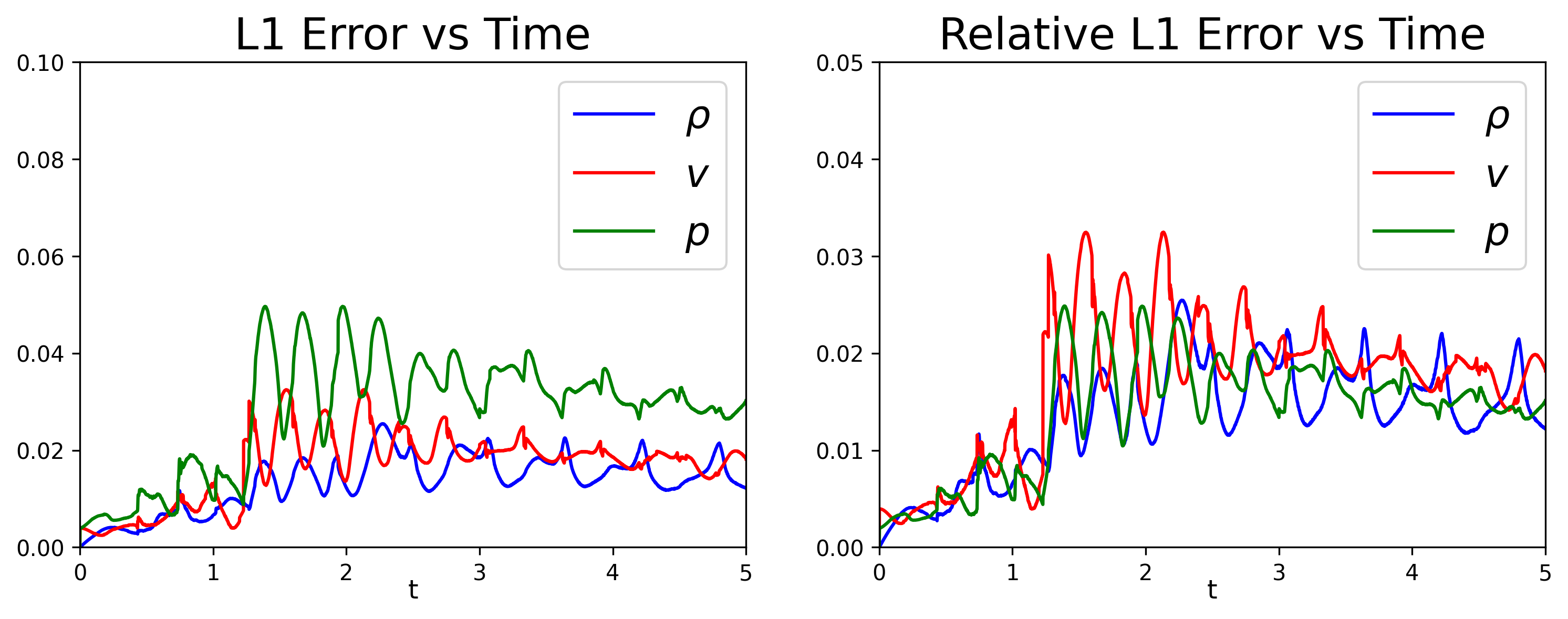}
    \caption{Time evolution of the $L^1(\T)$ error and the relative error for each variable in the Euler equations.}
    \label{fig:euler_error}
\end{figure}

\section{Conclusions}

This paper introduces SPIKE (Stable Physics-Informed Kernel Evolution), a novel method for solving inviscid hyperbolic conservation laws that captures discontinuous weak solutions through strong-form PDE residual minimization.

Through dynamic kernel superposition with evolving positions, SPIKE exhibits adaptive behavior: kernels track characteristics in smooth regions and cluster near discontinuities to propagate shocks at the Rankine-Hugoniot speed. This dual behavior emerges naturally from the regularized optimization process without requiring explicit shock detection or artificial viscosity terms.

The regularization mechanism prevents finite-time parameter blowup while ensuring convergence to the correct weak solution in the vanishing regularization limit. This provides insight into how Tikhonov regularization serves as a smooth transition mechanism through shock formation, allowing optimization dynamics to traverse discontinuities that appear singular from the strong-form perspective.

By exploiting the kernel's spline structure, the method achieves linear computational complexity during evolution. Numerical experiments across scalar and vector conservation laws demonstrate SPIKE's superior performance compared to existing physics-informed methods, maintaining sharp discontinuities with controlled error growth over extended simulations.

These results establish previously unexplored connections between reproducing kernel theory and hyperbolic conservation laws, suggesting that kernel-based approaches offer fundamental advantages for problems involving discontinuous solutions.

Beyond SPIKE's demonstrated success with one-dimensional hyperbolic conservation laws, several theoretical and computational questions emerge from this approach. The regularization mechanism operates beyond standard Hilbert space settings, requiring new analytical frameworks to understand how weak solution behavior emerges from strong-form optimization. Additionally, fundamental concepts from traditional numerical methods, like CFL conditions and convergence rate dependencies on $N$ and $\lambda$, remain unclear for this framework. Practical developments include extending to different domain configurations and implementing more sophisticated adaptive knot management strategies. Addressing these theoretical and computational challenges would broaden the foundation and utility of this approach for hyperbolic PDE.

\section*{Acknowledgement}
We thank Prof. Xiaochuan Tian and Dr. He Zhang for helpful discussions.

\bibliographystyle{siamplain}
\bibliography{SPIKEref}

\end{document}